\chardef\bslash=`\\ % p. 424, TeXbook
\DeclareMathAlphabet\mathbfcal{OMS}{cmsy}{b}{n}
\newcommand{\mathbi}[1]{{\boldsymbol #1}}
\def\N{\mathbb{N}}
\def\R{\mathbb{R}}
\def\err{\mathrm{err}}
\def\dist{{\rm dist}}
\def\Poly{\mathbb{P}}
\def\P{\mathcal{P}}
\def\Proj{\P_{\!\mesh}}
\def\<{\langle}
\def\>{\rangle}
\def\dsp{\displaystyle}
\def\norm#1#2{\Vert#1\Vert_{#2}}
\newcounter{cst}
\def\bt{\begin{theorem}}
	\def\et{\end{theorem}}
\def\bl{\begin{lemma}}
	\def\el{\end{lemma}}
\def\bc{\begin{corollary}}
	\def\ec{\end{corollary}}
\def\bd{\begin{definition}}
	\def\ed{\end{definition}}
\def\br{\begin{remark}}
	\def\er{\end{remark}}
\newcommand{\disc}{{\mathcal D}}
\def\ad{{\rm ad}}
\def\Prh{{\rm Pr}_h}
\def\Uad{\mathcal U_\ad}
\def\Uadh{\mathcal U_{\ad,h}}
\def\Uh{\mathcal U_{h}}
\def \hessian{\mathcal H}
\def \hb{\hessian^{B}}
\def \hbd{\hessian_\disc^{B}}
\def \wdspace{H^B}
\def\cv{K}
\def \symd{\mathcal S_d}
\def \sym2{\mathcal S_2}
\def\cell{K}
\newcommand{\polyd}{{\mathcal T}}
\newcommand{\mesh}{{\mathcal M}}
\newcommand{\edge}{{\sigma}}
\newcommand{\edges}{{\mathcal F}}              
\newcommand{\edgescv}{{{\edges}_\cv}}  
\newcommand{\edgesext}{{{\edges}_{\rm ext}}} 
\newcommand{\edgesint}{{{\edges}_{\rm int}}}
\newcommand{\centers}{\mathcal{P}}
\newcommand{\x}{\mathbi{x}}
\newcommand{\centeredge}{\overline{\mathbi{x}}_\edge} 
\def\minmod{\mathop{\rm minmod}}
\newcommand{\bu}{\overline{u}}
\def\stab{\mathfrak{S}}
\newcommand{\fl}{{\rm for\,all\,}}
\newcommand{\be}{\begin{equation}}
\newcommand{\ee}{\end{equation}}
\renewcommand{\O}{\Omega}
\def\dr{\partial}
\renewcommand{\d}{{\:\rm d}}
\newcommand{\ba}{\begin{array}{llll}   }
	\newcommand{\bac}{\begin{array}{c}}
		\newcommand{\bari}{\begin{array}{r}}
			\newcommand{\ea}{\end{array}}
		\newcommand{\NORM}[1]{{\left\vert\kern-0.25ex\left\vert\kern-0.25ex\left\vert #1 
				\right\vert\kern-0.25ex\right\vert\kern-0.25ex\right\vert}}
		\newcommand{\by}{\overline y}
		\newcommand{\bp}{\overline  p}
		\newcommand{\tu}{\widetilde{u}}
		\newcommand{\ud}{\overline{u}_d}
		\newcommand{\yd}{\overline{y}_d}
		\newcommand{\dm}{\disc_m}
		\def\assum#1{{\rm\textbf{(A#1)}}}
		\newtheorem{theorem}{Theorem}[section]
		\newtheorem{remark}[theorem]{Remark}
		\newtheorem{lemma}[theorem]{Lemma} 
		\newtheorem{definition}[theorem]{Definition}
		\newtheorem{proposition}[theorem]{Proposition}
		\newtheorem{corollary}[theorem]{Corollary}
		\numberwithin{equation}{section}
		\def\WS{{\rm WS}}
		\def\XXint#1#2#3{{\setbox0=\hbox{$#1{#2#3}{\int}$ }
				\vcenter{\hbox{$#2#3$ }}\kern-.6\wd0}}
		\definecolor{violet}{rgb}{0.580,0.,0.827}
		\newcounter{cexp}
		\def\terml#1{T_{\refstepcounter{cexp}\@bsphack
				\protected@write\@auxout{}%
				{\string\newlabel{#1}{{\thecexp}{\thepage}}}\thecexp}}
	\title{Numerical analysis of optimal control problems governed by fourth-order linear elliptic equations using the Hessian discretisation method}
\author{Devika Shylaja \footnote{Department of Mathematics, Indian Institute of Technology Madras, Chennai - 600 036, Tamil Nadu, India. {devus09@gmail.com}}}
\begin{document}
			\maketitle

\maketitle
\begin{abstract}
This paper focusses on the optimal control problems governed by fourth-order linear elliptic equations with clamped boundary conditions in the framework of the Hessian discretisation method (HDM).
The HDM is an abstract framework that enables the convergence analysis of numerical methods through a quadruplet known as a Hessian discretisation (HD) and three core properties of HD. The HDM covers several numerical schemes such as the conforming finite element methods, the Adini and Morley non-conforming finite element methods (ncFEMs), method based on gradient recovery (GR) operators and the finite volume methods (FVMs). Basic error estimates and superconvergence results are established for the state, adjoint and control variables in the HDM framework. The article concludes with numerical results that illustrates the theoretical convergence rates for the GR method, Adini ncFEM and FVM.
\end{abstract}

\medskip

{%\scriptsize
	\textbf{Keywords}: elliptic equations, control problems, numerical schemes, error estimates, Hessian discretisation method, Hessian schemes, finite element method, gradient recovery method, finite volume method. 
	
	\smallskip
	
	\textbf{AMS subject classifications}: 49J20, 49M25, 65N15, 65N30.
}

\section{Introduction}
This paper discusses the numerical approximation of optimal control problems governed by fourth-order linear elliptic equations with clamped boundary conditions using the Hessian discretisation method (HDM). The HDM, an abstract framework that covers several numerical schemes and establishes a unified convergence analysis for fourth-order linear and semi-linear elliptic partial differential equations, is discussed in \cite{HDM_linear}, \cite{DS_HDM} and \cite{JDNNDS_HDM}. 

\medskip
The HDM is an extension to fourth-order equations of the gradient discretisation method \cite{gdm}, developed for linear and non-linear second-order elliptic and parabolic problems. The HDM is based on a quadruplet given by a discrete space and three reconstructed operators known as a Hessian discretisation (HD) and three abstract properties namely, coercivity, consistency and limit-conformity of HD. These properties which are independent of the model ensure the convergence of a HDM. Some examples of numerical schemes that fit into the HDM framework are the conforming finite element methods (FEMs), the Adini and Morley non-conforming finite element methods (ncFEMs), the finite volume methods (FVMs) and a method based on gradient recovery (GR) operators, see \cite{ciarlet1978finite,biharmonicFV,biharmonicP1fem,BL_FEM,BLAM_Approx.proGRO} for a detailed analysis of these methods. A generic error estimate is established in $L^2$, $H^1$ and $H^2$-like norms in the HDM framework in \cite{HDM_linear}. Further, an improved $L^2$ and $H^1$-like error estimates compared to that in the energy norm in the abstract setting are derived in \cite{DS_HDM}. For instance, under regularity assumption on the exact solution, the generic $L^2$ estimate provides a linear rate of convergence for the Morley ncFEM and the GR methods while the improved estimate yields the expected quadratic rate of convergence in $L^2$ norm for these methods.% Note that the improved $L^2$ error estimates for the HDM plays an important role to obtain an improved error estimates for the control problem.

\medskip

The problems described by fourth-order linear elliptic equations arise from fluid mechanics and solid mechanics such as bending of elastic plates \cite{ciarlet_plate}. In \cite{SRRRWW}, a mixed formulation has been used for the biharmonic control problem on a convex polygonal domain where the state variable is discretized in primal mixed form using continuous piecewise biquadratic finite elements while piecewise constant approximations are used for the control. In \cite{TGNNKP}, a $C^0$ interior penalty method has been proposed and analyzed for distributed optimal control problems governed by the biharmonic operator. An abstract framework for the error analysis of discontinuous finite element methods applied to control constrained optimal control problems is established in \cite{SCTGAKN}. It is well-known that for piecewise constant control discretisations, the order of convergence in $L^2$ norm cannot be better than linear for the control variable. We refer to \cite{CMAR} for a superconvergence result for the control variable which is based on a post processing step. {\it{To the best of our knowledge, the numerical analysis of optimal control problems using the Adini ncFEM, method based on GR operators and the FVMs have not been studied in literature.}}%The general fourth-order linear elliptic problem on polygonal domains with clamped boundary conditions and its approximation using $C^0$ interior penalty method is considered  in \cite{TGHSGNN}. Error analysis for a stable $C^0$ interior penalty method is derived under minimal regularity assumptions on the exact solution. {\it{To the best of our knowledge, the numerical analysis of optimal control problem using the Adini ncFEM, method based on GR operators and the FVMs have not been studied in literature.}} %Though the numerical analysis of optimal control problems governed by diffusion equations are well-studied in literature, the approximation of optimal control problem governed by fourth-order linear elliptic equations is not much available in literature. To the best of our knowledge, all the fourth-order linear control problems in literature are governed by biharmonic equations. 

\medskip

In this paper, the optimal control problems governed by general fourth-order linear elliptic equations are discretised using the Hessian discretisation method. The basic error estimates applied to the control problems are established in a very generic setting with the help of three core properties associated with the HD. As a result, for these problems, all the schemes entering the HDM framework converge, in particular, conforming FEMs, Adini and Morley ncFEMs, FVMs and GR methods. Under regularity assumptions, if the control variable is approximated by piecewise constant functions, then the basic error estimate yields linear convergence rate for the control variable for the aforementioned FEMs and GR methods. However, a quadratic rate of convergence can be achieved using a post-processing step and following the ideas of \cite{GDM_control,CMAR} taking into account of the additional challenges offered by fourth-order problems in the HDM framework. The companion operator enables us to verify a discrete version of Sobolev embedding for ncFEMs and GR methods, which plays a major role in the analysis of superconvergence result. Companion operators are known to exist for these FEMs, but {\it one for the gradient recovery method with specific property (error between the discrete function and the corresponding companion in $L^2$ norm is small enough) is constructed for the first time}. Results of numerical experiments are presented for the Adini ncFEMs, GR method and FVMs. In the numerical implementation, the discretisation problem is solved using the primal-dual active set strategy \cite{tf}. % These results in particular applies to the conforming FEMs, the Adini and Morley ncFEMs, and the method based on the gradient recovery operators.

\medskip

The paper is organised as follows. Section \ref{sec.wfcontrol} deals with the optimal control problem  governed by fourth-order linear elliptic equation with clamped boundary conditions.  Section \ref{sec.HDMcontrol} introduces the Hessian discretisation method for the optimal control problem. Some examples of HDM are briefly presented in Section \ref{sec.examplesofHDM}. Section \ref{se.propertiesHDM} discusses the three core properties required to prove the convergence analysis. The main results are presented in Section \ref{sec.mainresults.HDMcontrol}. Section \ref{sec.basicerr.HDMcontrol} establishes basic error estimates for the control, state and adjoint variables in the HDM framework. The superconvergence result is presented in Section \ref{sec:superconvergencel.HDMcontrol} with the help of a post-processing step and a few generic assumptions on the Hessian discretisation which are discussed in detail for conforming FEMs, Adini and Morley ncFEMs, and GR methods.  The article concludes with numerical results (Section \ref{sec.exampleHDMcontrol}) that illustrates the theoretical convergence rates for the GR method, the Adini ncFEM and the FVM.  %The numerical results for the Adini ncFEM, GR method and FVM are presented in Section \ref{sec.exampleHDMcontrol}.
 
 \smallskip
 
 \textbf{Notations}. Let $\O \subset \R^d (d\ge1)$ be a bounded domain and $n$ denotes the unit outward normal to the boundary $\partial \Omega$ of $\O$. Let $\symd(\R)$ be the set of symmetric matrices and 
 $\hessian$ be the Hessian matrix. The tensor product between two matrices $\xi,\phi \in \symd(\R)$ is denoted by $\xi:\phi=\sum_{i,j=1}^{d}\xi_{ij}\phi_{ij}$. Set $\hessian:\xi=\sum_{i,j=1}^{d}\partial_{ij}\xi_{ij}$, where $\partial_{ij}=\frac{\partial^2}{\partial x_i \partial x_j}$. The transpose of a fourth-order symmetric tensor $P= (p_{ijkl})$ is given by $P^{\tau} = (p_{klij})$. Observe that $P^{\tau}\xi : \phi = \xi : P \phi$. The standard $ L^2$ inner product and norm (applied on) $L^2(\O)$, $L^2(\O)^d$ and $L^2(\O;\symd(\R))$ are denoted by $(\cdot,\cdot)$ and $\|{\cdot}\|$.
 
\section{The optimal control problem}\label{sec.wfcontrol}
Consider the following distributed optimal control problem governed by fourth-order linear elliptic equations:
\begin{subequations}\label{model_control}
	\begin{align}
	&  \min_{u \in \Uad} J(u) \,\,\, \textrm{ subject to } \label{costHDMcontrol}\\
	& {\sum_{i,j,k,l=1}^{d}\partial_{kl}(a_{ijkl}\partial_{ij}y) = f +\mathcal Cu  \quad \mbox{ in } \Omega, }\label{state}\\
	&\qquad \qquad \quad y=\frac{\partial y}{\partial n}= 0\quad\mbox{ on $\partial\O$}, \label{bc}
	\end{align}
\end{subequations}
where $u$ is the control variable, $y=y(u)$ is the state variable associated with $u$ and the coefficients $a_{ijkl}$ are measurable bounded functions which satisfy the condition, $ a_{ijkl}=a_{jikl}=a_{ijlk} =a_{klij}$ for $i,j,k,l=1,\cdots,d$. The load function $f$ belongs to $L^{2}(\Omega)$, ${\mathcal C} \in {\mathcal L}(L^2(\omega), L^2(\Omega))$ is a localization operator defined by ${\mathcal C}u(\x) = u(\x) \chi_{\omega}(\x)$, where $\chi_{\omega}$ is the characteristic function of $\omega \subset \Omega$. Here
\begin{equation}
J(u) :=\frac{1}{2}\|y-\yd\|^2  + \frac{\alpha}{2} \|u-\ud \|_{L^2(\omega)}^2\nonumber% \label{costJ}
\end{equation}
represents the cost functional, $\alpha>0$ is a fixed regularization parameter, $\yd$ is the desired state variable for $y$, $\ud \in L^2(\omega)$ is the desired control variable and $\Uad \subset L^2(\omega)$ is a non-empty, convex and closed admissible space of controls.

\medskip
For a given $u \in \Uad,$ the weak formulation that corresponds to \eqref{state}-\eqref{bc} seeks $y \in H^2_0(\O)$ %=\left\{ v \in H^2(\O); v=\frac{\partial v}{\partial n} = 0 \mbox{ on } \partial\O \right\}$ 
such that %${\mbox{ for all }}\, w \in H^2_{0}(\O)$,
\be\label{weak_HDMcontrol}
\begin{aligned}
	&a(y,w)=\int_\O (f +\mathcal Cu)w\d\x \qquad \fl  w \in H^2_{0}(\O),
\end{aligned}
\ee
where the continuous bilinear form $a(\cdot,\cdot):H^2_0(\O)\times H^2_0(\O) \to \R$ is defined by $$a(z,w):=\sum_{i,j,k,l=1}^{d}\int_\O a_{ijkl}\partial_{ij}z \partial_{kl}w \d\x = \int_\O \hb z:\hb w \d\x \mbox{ with } \hb w =B\hessian w.$$

Assume that $B$ is constant over $\O$, and satisty the following coercivity property:
\be \label{coer:B.}
\exists \varrho>0\mbox{ such that } \norm{\hb w}{}\ge \varrho\norm{w}{H^2(\Omega)}\quad \fl w\in H^2_0(\Omega).
\ee
Hence, the Lax-Milgram Theorem guarantees both existence and uniqueness of a solution to \eqref{weak_HDMcontrol}.

\smallskip

The Karush-Kuhn-Tracker (KKT) optimality system \cite{jl} for the control problem \eqref{model_control} seeks $(\by,\bp,\bu) \in H^2_0(\O) \times H^2_0(\O) \times \Uad$ such that
\begin{subequations} \label{continuous}
	\begin{align}
& a(\by,w) = (f + {\mathcal C}\bu, w) \,  \qquad \fl w \in H^2_0(\O) \quad \text{(state equation)} \label{state_contcontrol}\\
& a(w,\bp) = (\by-\yd, w)\,   \qquad \fl \: w \in H^2_0(\O) \quad \text{(adjoint equation)} \label{adj_contcontrol}\\
&({\mathcal C}^*\bp+\alpha(\bu-\ud),v-\bu)\geq 0\, \qquad  \fl \: v\in \Uad \quad \text{(optimality condition)} \label{opt_contcontrol}
	\end{align}
\end{subequations}
where $\bp$ is the adjoint state associated with $(\by,\bu)$ and ${\mathcal C}^*$ is the adjoint operator of ${\mathcal C}$. Define the projection operator $P_{[a,b]}:\R\to [a,b]$ by, for all $s\in\R\,,\; P_{[a,b]}(s) := \min\{b, \max\{a, s\}\}.$
From the first order optimality condition \eqref{opt_contcontrol}, the following pointwise relation hold true for a.e. $\x \in \O$ \cite[Theorem 2.28]{tf}:
\be \label{conts.proj}
\bu(\x)=P_{[a,b]}\left(\ud(\x)-\frac{{\mathcal C}}{\alpha}\bp\right).
\ee

\section{The Hessian discretisation method for the control problem}\label{sec.HDMcontrol}

%This section deals with the HDM for the optimal control problem and the three properties, independent of the model, which ensures the convergence of a HDM. 
This section starts with the HDM for the elliptic equations and is followed by a few examples of numerical schemes that fit into the HDM framework in Subsection \ref{sec.examplesofHDM}. Three properties, independent of the model, which ensures the convergence of a HDM are discussed briefly in Subsection \ref{se.propertiesHDM}. The HDM for the optimal control problem is presented at the end of this section.

%\smallskip

%Let us start with the definition of HDM for the elliptic equations and its convergence analysis.
\begin{definition}[$B$-Hessian discretisation \cite{HDM_linear}]\label{HD}
	A $B$-Hessian discretisation for fourth-order linear elliptic equations with clamped boundary conditions is a quadruplet $\disc=(X_{\disc,0},\Pi_\disc,\nabla_\disc,\hbd)$ such that
	\begin{itemize}
		\item $X_{\disc,0}$ is a finite-dimensional space encoding the unknowns of the method,
		\item $\Pi_\disc:X_{\disc,0}  \rightarrow L^2(\O)$ is a linear mapping that reconstructs a function from the unknowns,
		\item $\nabla_\disc:X_{\disc,0}  \rightarrow L^2(\O)^d$ is a linear mapping that reconstructs a gradient from the unknowns,
		\item $\hbd:X_{\disc,0}  \rightarrow L^2(\O;\symd(\R))$ is a linear mapping that reconstructs $\hb=B\hessian$ from the unknowns. It must be chosen such that $\norm{\cdot}{\disc}:=\norm{\hbd \cdot}{}$ is a norm on  $X_{\disc,0}.$
	\end{itemize}
\end{definition}

If $\disc=(X_{\disc,0},\Pi_\disc,\nabla_\disc,\hbd)$ is a $B$-Hessian discretisation, the corresponding scheme, called Hessian scheme, for
\be \label{weak} 
	 {\sum_{i,j,k,l=1}^{d}\partial_{kl}(a_{ijkl}\partial_{ij}\psi) =F \quad \mbox{ in } \Omega, } \quad \psi=\frac{\partial \psi}{\partial n}= 0\quad\mbox{ on $\partial\O$}, 
\ee 
is given by
\be\label{base.Hessian scheme}
\begin{aligned}
	&\mbox{Find $\psi_\disc\in X_{\disc,0}$ such that for any $w_\disc\in X_{\disc,0}$,}\\
	&\int_\O \hbd \psi_\disc:\hbd w_\disc\d\x=\int_\O F \Pi_\disc w_\disc\d\x.
\end{aligned}
\ee
As can be seen here, this Hessian scheme is obtained by replacing the continuous space and operators by their discrete ones in the weak formulation of \eqref{weak}.

\subsection{Examples of HDM}\label{sec.examplesofHDM}
This section deals with some examples of numerical methods that are covered in the HDM approach, see \cite{HDM_linear,DS_HDM} for more details.

Let us first set some notations related to meshes.
\begin{definition}[Polytopal mesh {\cite[Definition 7.2]{gdm}}]\label{def:polymesh}~
	Let $\Omega$ be a bounded polytopal open subset of $\R^d$ ($d\ge 1$). A polytopal mesh of $\O$ is $\polyd = (\mesh,\edges,\centers)$, where:
	\begin{enumerate}
		\item $\mesh$ is a finite family of non empty connected polytopal open disjoint subsets of $\O$ (the cells) such that $\overline{\O}= \dsp{\cup_{\cell \in \mesh} \overline{\cell}}$. For any $\cell\in\mesh$, $|\cell|>0$ is the measure of $\cell$, $h_\cell$ denotes the diameter of $\cell$, $\overline{\x}_\cell$ is the center of mass of $K$, and $n_K$ is the outer unit normal to $K$.
		
		\item $\edges$ is a finite family of disjoint subsets of $\overline{\O}$ (the edges of the mesh in 2D, the faces in 3D), such that any $\edge\in\edges$ is a non empty open subset of a hyperplane of $\R^d$ and $\edge\subset \overline{\O}$. Assume that for all $\cell \in \mesh$ there exists  a subset $\edgescv$ of $\edges$ such that the boundary of $\cell$ is ${\bigcup_{\edge \in \edgescv}} \overline{\edge}$. We then set $\mesh_\edge = \{\cell\in\mesh\,;\,\edge\in\edgescv\}$ and assume that, for all $\edge\in\edges$, $\mesh_\edge$ has exactly one element and $\edge\subset\partial\O$, or $\mesh_\edge$ has two elements and $\edge\subset\O$. Let $\edgesint$ be the set of all interior faces, i.e. $\edge\in\edges$ such that $\edge\subset \O$, and $\edgesext$ the set of boundary faces, i.e. $\edge\in\edges$ such that $\edge\subset \dr\O$. The $(d-1)$-dimensional measure of $\edge\in\edges$ is $|\edge|$, and its centre of mass is $\centeredge$.
		
		\item $\centers = (\x_\cell)_{\cell \in \mesh}$ is a family of points of $\O$ indexed by $\mesh$ and such that, for all  $\cell\in\mesh$,  $\x_\cell\in \cell$.
		Assume that any cell $\cell\in\mesh$ is strictly $\x_\cell$-star-shaped, meaning that 
		if $\x\in\overline{\cell}$ then the line segment $[\x_\cell,\x)$ is included in $\cell$. 
		
	\end{enumerate}
	The diameter of such a polytopal mesh is $h=\max_{K\in\mesh}h_K$. The set of internal vertices of $\mesh$ (resp. vertices on the boundary) is denoted by $\mathcal{V}_{\rm int}$ (resp. $\mathcal{V}_{\rm ext}$). 
\end{definition}
%The meshes $\mesh=\mesh_h$ are assumed to be regular \cite{ciarlet1978finite} in the classical sense that the ratio of the diameter and the radius of the largest ball centred at $\overline{\x}_\cell$ and included in $\cell$ is uniformly bounded with a bound independent of the mesh size $h$. 

 For $K\in\mesh$, let $\rho_K=\max\{r>0\,:\,B(\overline{\x}_K,r)\subset K\}$
be the maximal radius of balls centred at $\overline{\x}_K$ and included in $K$.
Assume that the mesh is regular, in the sense that there exists $\eta>0$ such that
\be\label{reg:mesh}
{\mbox{ for all }} K\in\mesh\,,\; \eta\ge \frac{{\rm diam}(K)}{\rho_K}.
\ee
Let the space of polynomials of degree at most $k$ in $K$ be denoted by $\mathbb{P}_k(K)$.
%We assume that $\mesh=\mesh_h$ satisfies minimal regularity assumptions. That is, if $\rho_\cell=\max\{r>0\,:\,B(\overline{\x}_\cell,r)\subset \cell\}$, then there exists $\eta>0$, independent of $h$, such that ${\mbox{ for all }} \cell\in\mesh\,,\; \frac{h_K}{\rho_\cell} \le \eta.$

\subsubsection{Conforming finite element methods}\label{sec.conforming} The $B$-Hessian discretisation $\disc=(X_{\disc,0},\Pi_\disc,\nabla_\disc,\hbd)$ for conforming FEMs is defined by: $X_{\disc,0}$ is a finite dimensional subspace of $H^2_0(\O)$ and, for $v_\disc\in X_{\disc,0}$, $\Pi_\disc v_\disc=v_\disc$, $\nabla_\disc v_\disc=\nabla v_\disc$ and $\hbd v_\disc=\hb v_\disc$. Examples of conforming finite elements include the Argyris and Bogner--Fox--Schmit (BFS) finite elements, see \cite{ciarlet1978finite} for details.

\subsubsection{Non-conforming finite element methods}\label{sec.ncFEM.eg}%\textbf{}\\
$\bullet$ \textsc{the Adini element \cite{ciarlet1978finite}: }Assume that $\O\subset\R^2$ can be covered by a mesh $\mesh$ made up of rectangles. Figure \ref{ncfem.fig} (left) represents an Adini rectangle $\cell \in \mesh$ with vertices $a_1,\,a_2,\,a_3$ and $a_4$ respectively. Each $v_\disc\in X_{\disc,0}$  is a vector of three values at each vertex of the mesh (with zero values at boundary vertices), corresponding to function and gradient values, $\Pi_\disc v_\disc$ is the function such that the values of $(\Pi_\disc v_\disc)_{|K}\in \mathbb{P}_3(K) \oplus \{x_1x^3_2\} \oplus\{x^3_1x_2\}$ and its gradients at the vertices are dictated by $v_\disc$, $\nabla_\disc v_\disc=\nabla(\Pi_\disc v_\disc)$ and $\hbd v_\disc= \hb_{\mesh}(\Pi_\disc v_\disc)$ is the broken $\hb$ of $\Pi_\disc v_\disc$. 
\begin{figure}
	\begin{center}
		\begin{minipage}[b]{0.4\linewidth}
			{\includegraphics[width=5.cm]{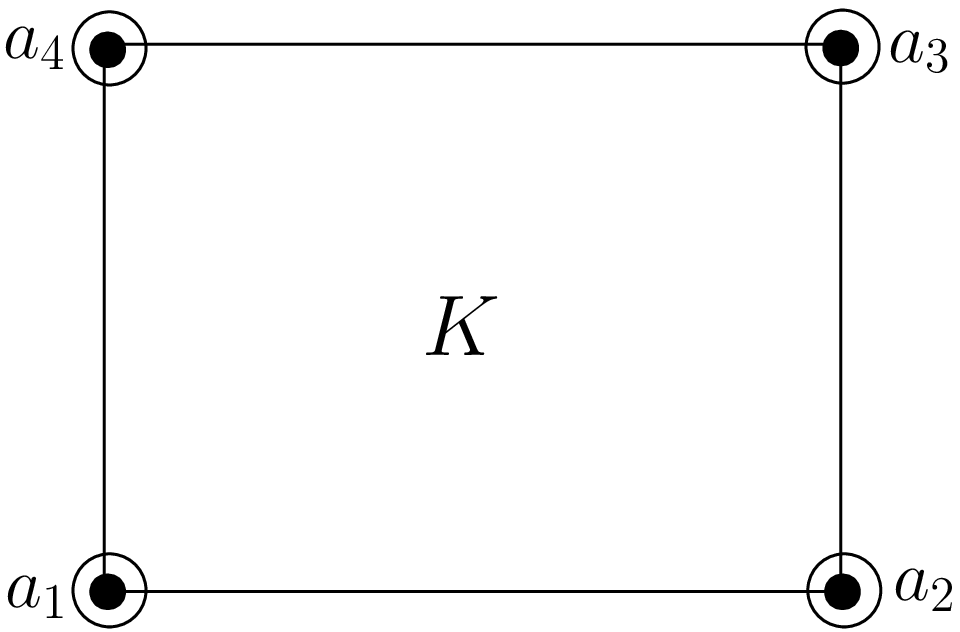}}
			%{\includegraphics[width=5.cm]{}}
		\end{minipage}
		\qquad \quad
		\begin{minipage}[b]{0.35\linewidth}
			%{\includegraphics[width=4.3cm]{}}
			{\includegraphics[width=4.3cm]{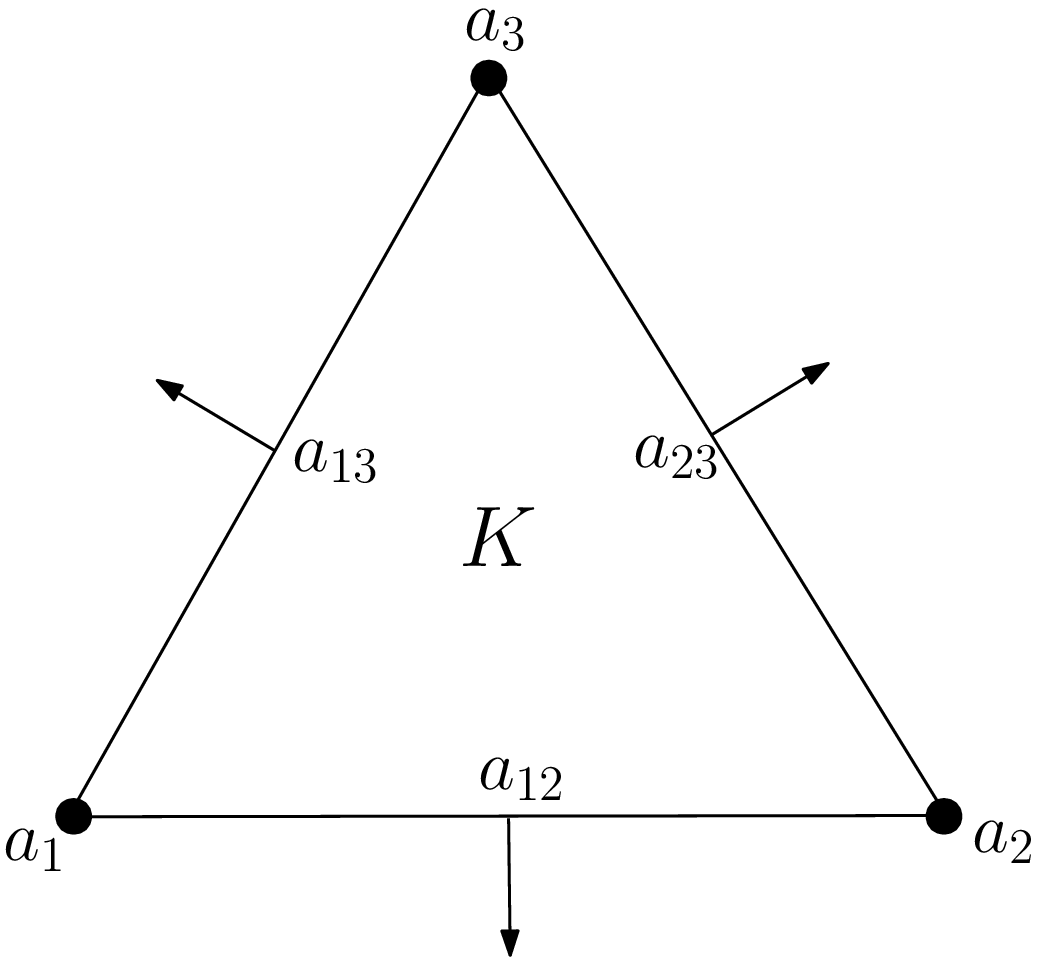}}
		\end{minipage}
		\caption{Adini element (left) and Morley element (right) }\label{ncfem.fig}
	\end{center}
\end{figure}
\smallskip

$\bullet$ \textsc{the Morley element \cite{ciarlet1978finite}: }Let $\mesh$ be a regular conforming triangulation of $\overline{\O}$ into closed triangles (see Figure \ref{ncfem.fig}, right). Each $v_\disc\in X_{\disc,0}$  is a vector of degrees of freedom at the vertices of the mesh (with zero values at boundary vertices) and at the midpoint of the edges opposite to these vertices (with zero values at midpoint of the boundary edges). $\Pi_\disc v_\disc$ is the function such that $(\Pi_\disc v_\disc)_{|K}\in \mathbb{P}_2(K)$ (resp. its normal derivatives) takes the values at the vertices (resp. at the edge midpoints) dictated by $v_\disc$, $\nabla_\disc v_\disc= \nabla_{\mesh}(\Pi_\disc v_\disc)$ is the broken gradient of $\Pi_\disc v_\disc$ and $\hbd v_\disc= \hb_{\mesh}(\Pi_\disc v_\disc)$ is the broken $\hb$ of $\Pi_\disc v_\disc$. 

\subsubsection{Method based on Gradient Recovery Operators}\label{sec.gr}
This method is based on the conforming $\mathbb{P}_1$ finite element space $V_h$
and a gradient recovery operator $Q_h: L^2(\O) \rightarrow V_h$ (see, e.g., \cite[Section 4.2]{HDM_linear} for a GR operator based on biorthogonal systems). This GR operator helps us to lift the non-differentiable piecewise-constant gradient of $\mathbb{P}_1$ finite element functions into the $\mathbb{P}_1$ finite element space itself; the lifted functions are thus differentiable, and can be used to construct a sort of discrete reconstructed Hessian. In order to ensure the coercivity property of this reconstructed Hessian, we consider a stabilisation function $\stab_h\in L^\infty(\O)^d$ with specific design properties \cite[Section 4]{HDM_linear}. Then the $B$--Hessian discretisation based on a triplet $(V_h,Q_h,\stab_h)$ is defined by: $X_{\disc,0}=V_h$ and, for $u_\disc \in X_{\disc,0}$, $\Pi_\disc u_\disc=u_\disc,\,\nabla_\disc u_\disc = Q_h\nabla u_\disc$ and $\hbd u_\disc=B\left[\nabla (Q_h \nabla u_\disc)+\stab_h\otimes (Q_h\nabla u_\disc-\nabla u_\disc)\right],$
where the tensor product $a\otimes b$ of two vectors $a,b\in\R^d$ is the 2-tensor with coefficients $\sum_{i,j=1}^{d}a_ib_j$.
\subsubsection{Finite volume method based on $\Delta$-adapted discretisations}\label{FVM} This section considers the FVM \cite{biharmonicFV} for the biharmonic problem with $\hb=\Delta$ where the mesh satisfy an orthogonality condition known as $\Delta$-adapted mesh (see Figure \ref{fig:diagram} for the two-dimensional case). For all  $\edge \in \edgesint$ with $\mesh_{\edge}= \{\cell, L\}$, the straight line $(\x_\cell, \x_L)$ intersects and is orthogonal to $\edge$, and for all $\edge \in \edgesext$ with $\mesh_{\edge}= \{\cell\}$, the line orthogonal to $\edge$ going through $\x_\cell$ intersects $\edge$. Since $\hb=\Delta$ in this method, one possible choice of $B$ is therefore to set $B\xi=\frac{\rm{tr}(\xi)}{\sqrt{d}}{\rm Id}$ for $\xi \in \symd(\R)$ where ${\rm Id}$ is the identity matrix. This method requires only one unknown per cell and is therefore easy to implement and computational cheap. 

\begin{figure}[h]
	\centering
	\includegraphics[width=0.7\linewidth]{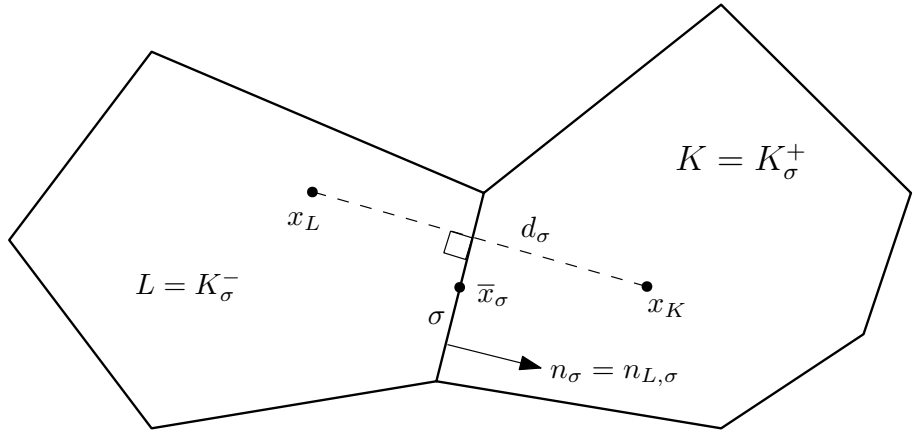}
	\caption{Notations for $\Delta$-adapted discretisation}
	\label{fig:diagram}
\end{figure}

$X_{\disc,0}$ is the space of all real families $v_\disc=(v_\cell)_{\cell \in \mesh}$ such that $v_\cell=0$ if $\cell$ touches $\partial \O$. The operator $\Pi_\disc$ reconstructs a piecewise constant function given by: for any cell $K$, $\Pi_\disc v_\disc = v_K$ on $K$. For $\cell \in \mesh$ and $\edge \in \edgescv$, let $n_{\cell,\edge}$ be the unit vector normal to $\edge$ outward to $\cell$. For all $\edge \in \edges$, we choose an orientation (that is, a cell $K$ such that $\edge\in\edges_K$) and set $n_\edge=n_{\cell,\edge}$. For each $\edge \in \edgesint$, denote by $\cell^-_{\edge}$ and $\cell^+_{\edge}$ the two adjacent control volumes such that the unit normal vector $n_{\edge}$ is oriented from $\cell^-_{\edge}$ to $\cell^+_{\edge}$. For all $\edge \in \edgesext$, denote the control volume $\cell \in \mesh$ such that $\edge \in \edgescv$ by $\cell_{\edge}$ and define $n_{\edge}$ by $n_{\cell,\edge}$. Let
\begin{equation*}%\label{dsigma}
d_\edge=\left\{
\begin{array}{ll}
\dist(\x_{\cell_\edge^-},\edge) + \dist(\x_{\cell_\edge^+},\edge) & {\mbox{ for all }} \edge \in \edgesint \\
\dist(\x_{\cell_\edge},\edge)&  {\mbox{ for all }} \edge \in \edgesext
\end{array}\right.
%\label{dsigma}
\end{equation*}
where $\dist(\x_{\cell},\edge)$ denotes the orthogonal distance between $\x_\cell$ and $\edge$. The discrete gradient $\nabla_\disc$ and the Laplace operator $\Delta_\disc$  are defined by their constant values on the cells. 
\begin{equation*}
\nabla_\cell v_\disc=\frac{1}{|\cell|}\sum_{\edge \in \edges_\cell}^{}\frac{|\edge|(\delta_{\cell,\edge} v_\disc)(\centeredge-\x_\cell)}{d_\edge},  \quad  \Delta_\cell v_\disc=\frac{1}{|\cell|}\sum_{\edge \in \edges_\cell}^{}\frac{|\edge|\delta_{\cell,\edge} v_\disc}{d_\edge}, \label{biharmonic_nabla}
\end{equation*}
and set $\hbd v_\disc = \frac{\Delta_\disc v_\disc}{\sqrt{d}}{\rm Id}$, where \begin{equation*}
\delta_{\cell,\edge} v_\disc=\left\{\begin{array}{ll}
v_L-v_\cell &{\mbox{ for all }} \,\edge \in  \edgescv \cap \edgesint\,,\; \mesh_{\edge}= \{\cell, L\} \\
0 & {\mbox{ for all }} \, \edge \in \edgescv \cap \edgesext.
\end{array}\right.
\label{jump}
\end{equation*}

\subsection{Properties of HDM}\label{se.propertiesHDM}
The accuracy of the Hessian scheme can be evaluated using only three measures, all intrinsic to the Hessian discretisation. The first one is a discrete Poincar\'{e} constant, $C_\disc^B$, which controls the norm of the reconstructed function $\Pi_\disc$ and reconstructed gradient $\nabla_\disc$.
\be\label{def.CD}
C_\disc^B = \max_{w_\disc\in X_{\disc,0}\backslash\{0\}} \left(\frac{\norm{\Pi_\disc w_\disc}{}}{\norm{\hbd w_\disc}{}},
\frac{\norm{\nabla_\disc w_\disc}{}}{\norm{\hbd w_\disc}{}}\right).
\ee
The second measure of accuracy  involves an estimate of the interpolation error called consistency in the framework of the HDM and is defined by
\be\label{def.SD}
\begin{aligned}
	&{\mbox{ for all }}  \,\varphi\in H^2_0(\O)\,,\\
	&S_\disc^B(\varphi)=\min_{w_\disc\in X_{\disc,0}}\Big(\norm{\Pi_\disc w_\disc-\varphi}{}
	+\norm{\nabla_\disc w_\disc-\nabla\varphi}{}+\norm{\hbd w_\disc-\hb \varphi}{}\Big).
\end{aligned}
\ee
Finally, the third quantity is a measure of limit-conformity of the HD which measures a defect of the discrete integration-by-parts formula between $\Pi_\disc$ and $\hbd$:%that is, how well a discrete integration-by-parts formula is verified by the discrete operators:
\be\label{def.WD}
\begin{aligned}
	&{\mbox{ for all }} \, \xi \in \wdspace(\O):=\{\zeta\in L^2(\O)^{d \times d}\,;\,\hessian:B^{\tau}B\zeta \in L^2(\O) \}\,,\\
	&W_\disc^B(\xi)=\max_{w_\disc\in X_{\disc,0}\backslash\{0\}}
	\frac{1}{\norm{\hbd w_\disc}{}}\Bigg|\int_\O \Big((\hessian:B^{\tau}B\xi)\Pi_\disc w_\disc- B\xi:\hbd w_\disc \Big)\d\x \Bigg|.
\end{aligned}
\ee
For $\phi\in H^2_0(\O)$, integration-by-parts twice show $\int_{\O}^{}(\hessian:B^{\tau}B\xi)\phi = \int_{\O}^{}B\xi:\hb \phi$. Hence, the quantity in the right-hand side of \eqref{def.WD} measures the error in the discrete Stokes formula. % indeed a defect of discrete integration-by-parts between $\Pi_\disc$ and $\hbd$.

%Closely associated to the three measures above are the notions of coercivity, consistency and limit-conformity of a sequence of Hessian discretisations.

\begin{definition}[Coercivity, consistency and limit-conformity]
	Let $(\disc_m)_{m \in \N}$ be a sequence of $B$-Hessian discretisations in the sense of Definition \ref{HD}. We say that
	\begin{enumerate}
		\item $(\disc_m)_{m\in\N}$ is \emph{coercive} if there exists $C_P \in \R^+$ such that $C_{\disc_{m}}^B \leq C_P$ for all $m \in \N$.
		\item $(\disc_m)_{m\in\N}$ is \emph{consistent}, if 
	%	\be\label{def:cons}
	$$	{\mbox{ for all }}\, \varphi\in H^2_0(\O)\,,
		\lim_{m \rightarrow \infty} S_{\disc_m}^B(\varphi)=0.$$
	%	\ee
		\item $(\disc_m)_{m\in\N}$ is \emph{limit-conforming}, if 
	%	\be\label{def:lc}
	$$	{\mbox{ for all }}\, \xi\in \wdspace(\O)\,,
		\lim_{m \rightarrow \infty} W^B_{\disc_m}(\xi)=0.$$
	%	\ee
	\end{enumerate}
\end{definition}

%
%\begin{remark}\label{rem:dens} As for the (2nd order) gradient discretisation method (see \cite[Lemmas 2.17 and 2.18]{koala}), it is easily proved that, for coercive sequences of HDs, the consistency and
%	limit-conformity properties \eqref{def:cons} and \eqref{def:lc} only need to be tested for functions in dense subsets of $H^2_0(\O)$ and $H^B(\O)$, respectively.
%\end{remark}
%
%\begin{remark}\label{B_I_GD}
%	If $B = I$, we simply denote $\hessian_{\disc}^{I}$ (resp. $C_\disc^B$, $	S_\disc^B$ and $W_\disc^B$) by $\hd$ (resp. $C_\disc$, $	S_\disc$ and $W_\disc$).
%\end{remark}

The error estimates for the HDM applied to the fourth-order linear elliptic equations is stated below. %For a proof, see \cite{HDM_linear}.

\begin{theorem}[Error estimate for Hessian schemes {\cite[Theorem 3.6]{HDM_linear}}]\label{th:error.est.PDE} Under Assumption \eqref{coer:B.}, let $\psi$ be the weak solution to \eqref{weak}. Let $\disc$ be a $B$-Hessian discretisation and $\psi_\disc$ the solution to the corresponding Hessian scheme \eqref{base.Hessian scheme}. Then% we have the following error estimates.
	\begin{align*}
	\norm{\Pi_\disc \psi_\disc -\psi}{}& \le C_\disc^B W_\disc^B(\hessian \psi) +(C_\disc^B+1) S_\disc^B(\psi),\\% \label{Pi_error} \\
	\norm{\nabla_\disc \psi_\disc -\nabla\psi}{}
	& \le C_\disc^B W_\disc^B(\hessian \psi) +(C_\disc^B+1) S_\disc^B(\psi),\\% \label{nabla_error} \\
	\norm{\hbd \psi_\disc-\hb \psi}{}
	& \le W_\disc^B(\hessian \psi) +2S_\disc^B(\psi). %\label{Hessian_error} 
	\end{align*}	
%	(Note that $\hessian \bu \in \wdspace(\O)$ because $\hessian \bu \in  L^2(\O)^{d \times d} \mbox{ and }\hessian:B^{\tau}B \hessian \bu = \hessian : A \hessian \bu = f \in L^2(\O)$.)
\end{theorem}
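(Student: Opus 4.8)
The plan is to derive the three estimates from a single master inequality comparing $\psi_\disc$ to an arbitrary test element $w_\disc\in X_{\disc,0}$, in the spirit of a second Strang lemma adapted to the Hessian discretisation. First I would use the coercivity of the scheme: since $\norm{\cdot}{\disc}=\norm{\hbd\cdot}{}$ is a norm and $\hbd$ appears quadratically on the left of \eqref{base.Hessian scheme}, for any $w_\disc\in X_{\disc,0}$ one can write
\begin{equation*}
\norm{\hbd(\psi_\disc-w_\disc)}{}^2 = \int_\O \hbd\psi_\disc:\hbd(\psi_\disc-w_\disc)\d\x - \int_\O \hbd w_\disc:\hbd(\psi_\disc-w_\disc)\d\x.
\end{equation*}
On the first term I substitute the scheme \eqref{base.Hessian scheme} with test function $\psi_\disc-w_\disc$, turning it into $\int_\O F\,\Pi_\disc(\psi_\disc-w_\disc)\d\x$. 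Now I recall that $\psi$ solves \eqref{weak}, i.e. $F = \hessian:B^\tau B\hessian\psi$ in the distributional sense, so $\int_\O F\,\Pi_\disc(\psi_\disc-w_\disc)\d\x$ is exactly the kind of quantity controlled by the limit-conformity measure $W_\disc^B$ applied to $\xi=\hessian\psi$ (note $B^\tau B\xi = B^\tau B\hessian\psi$ and $B\xi=B\hessian\psi=\hb\psi$). Concretely, $\int_\O F\,\Pi_\disc(\psi_\disc-w_\disc)\d\x \le \int_\O \hb\psi:\hbd(\psi_\disc-w_\disc)\d\x + W_\disc^B(\hessian\psi)\,\norm{\hbd(\psi_\disc-w_\disc)}{}$.

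Substituting back, the terms $\int_\O \hb\psi:\hbd(\psi_\disc-w_\disc)\d\x - \int_\O \hbd w_\disc:\hbd(\psi_\disc-w_\disc)\d\x$ combine into $\int_\O(\hb\psi-\hbd w_\disc):\hbd(\psi_\disc-w_\disc)\d\x$, which by Cauchy--Schwarz is bounded by $\norm{\hb\psi-\hbd w_\disc}{}\,\norm{\hbd(\psi_\disc-w_\disc)}{}$. Dividing through by $\norm{\hbd(\psi_\disc-w_\disc)}{}$ yields
\begin{equation*}
\norm{\hbd(\psi_\disc-w_\disc)}{} \le W_\disc^B(\hessian\psi) + \norm{\hb\psi-\hbd w_\disc}{}.
\end{equation*}
From here the third estimate follows by a triangle inequality $\norm{\hbd\psi_\disc-\hb\psi}{}\le\norm{\hbd(\psi_\disc-w_\disc)}{}+\norm{\hbd w_\disc-\hb\psi}{}\le W_\disc^B(\hessian\psi)+2\norm{\hbd w_\disc-\hb\psi}{}$, and then I take the infimum over $w_\disc$ and bound $\norm{\hbd w_\disc-\hb\psi}{}\le S_\disc^B(\psi)$ by definition \eqref{def.SD} (choosing $w_\disc$ to be the near-minimiser realising $S_\disc^B(\psi)$ up to $\varepsilon$, then letting $\varepsilon\to0$). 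For the first two estimates I use the Poincaré-type bound \eqref{def.CD}: $\norm{\Pi_\disc(\psi_\disc-w_\disc)}{}\le C_\disc^B\norm{\hbd(\psi_\disc-w_\disc)}{}$ and likewise for $\nabla_\disc$, then a triangle inequality against $\psi$ and $\nabla\psi$ respectively. For instance $\norm{\Pi_\disc\psi_\disc-\psi}{}\le C_\disc^B\norm{\hbd(\psi_\disc-w_\disc)}{}+\norm{\Pi_\disc w_\disc-\psi}{}\le C_\disc^B\big(W_\disc^B(\hessian\psi)+\norm{\hbd w_\disc-\hb\psi}{}\big)+\norm{\Pi_\disc w_\disc-\psi}{}$; taking $w_\disc$ to be the near-minimiser for $S_\disc^B(\psi)$ makes both $\norm{\hbd w_\disc-\hb\psi}{}$ and $\norm{\Pi_\disc w_\disc-\psi}{}$ at most $S_\disc^B(\psi)+\varepsilon$, giving the coefficient $(C_\disc^B+1)$ in the limit.

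The step I expect to be the main (though still mild) obstacle is the justification that $\int_\O F\,\Pi_\disc(\psi_\disc-w_\disc)\d\x$ is legitimately controlled by $W_\disc^B(\hessian\psi)$: this requires knowing that $\hessian\psi\in \wdspace(\O)$, i.e. that $\hessian:B^\tau B\hessian\psi\in L^2(\O)$. This is where one must invoke that $\psi$ is a \emph{strong} solution of \eqref{weak} with $F\in L^2(\O)$ (guaranteed since $F$ on the right side of the PDE is in $L^2$), so that the distributional identity $F=\hessian:B^\tau B\hessian\psi$ holds as an $L^2$ equality and $\hessian\psi$ indeed lies in the domain $\wdspace(\O)$ on which $W_\disc^B$ is defined; the integration-by-parts remark following \eqref{def.WD} then applies verbatim. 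Everything else is Cauchy--Schwarz, triangle inequalities, and passing to the infimum, so once this membership is in place the three bounds drop out directly.
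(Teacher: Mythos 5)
Your argument is correct and is essentially the standard proof of this result (the paper itself only cites it from \cite[Theorem 3.6]{HDM_linear} without reproducing the argument): the Strang-type master inequality $\norm{\hbd(\psi_\disc-w_\disc)}{}\le W_\disc^B(\hessian\psi)+\norm{\hb\psi-\hbd w_\disc}{}$, followed by triangle inequalities, the Poincar\'e bound \eqref{def.CD}, and optimisation over $w_\disc$, is exactly the intended route, and your identification of $F=\hessian:B^\tau B\hessian\psi\in L^2(\O)$ as the reason $\hessian\psi\in\wdspace(\O)$ is the right justification for invoking $W_\disc^B$. The only cosmetic point is that the $\varepsilon$-near-minimiser is unnecessary: $S_\disc^B(\psi)$ is a minimum over the finite-dimensional space $X_{\disc,0}$, so it is attained, and the minimiser bounds each of the three terms in \eqref{def.SD} individually by $S_\disc^B(\psi)$, which is what produces the coefficient $C_\disc^B+1$.
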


An immediate consequence of the error estimate is the following convergence result.
\begin{corollary}[Convergence]
	If $(\disc_m)_{m \in \N}$ is a sequence of $B$-Hessian discretisation	which is coercive, consistent and limit-conforming, then, as $m \rightarrow \infty$, $\Pi_{\dm} \psi_{\dm} \rightarrow \psi$ in $L^2(\O)$, $\nabla_{\dm}\psi_{\dm} \rightarrow \nabla\psi$ in $L^2(\O)^d$ and $\hb_{\dm} \psi_{\dm} \rightarrow \hb \psi$ in $L^2(\O)^{d \times d}$.
\end{corollary}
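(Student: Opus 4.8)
The plan is to read off the three convergences directly from the error estimates of Theorem~\ref{th:error.est.PDE}, by showing that under coercivity, consistency and limit-conformity the right-hand sides of those estimates tend to $0$ as $m\to\infty$. For each $m$, let $\psi_{\dm}\in X_{\dm,0}$ be the solution of the Hessian scheme \eqref{base.Hessian scheme} associated with $\dm$, and apply Theorem~\ref{th:error.est.PDE} with $\disc=\dm$: each of the three errors $\norm{\Pi_{\dm}\psi_{\dm}-\psi}{}$, $\norm{\nabla_{\dm}\psi_{\dm}-\nabla\psi}{}$ and $\norm{\hb_{\dm}\psi_{\dm}-\hb\psi}{}$ is then bounded above by an affine combination of $W_{\dm}^B(\hessian\psi)$ and $S_{\dm}^B(\psi)$ whose coefficients involve only $C_{\dm}^B$ (at most $C_{\dm}^B$ in front of $W_{\dm}^B(\hessian\psi)$, and at most $C_{\dm}^B+1$ in front of $S_{\dm}^B(\psi)$). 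Hence it suffices to show that these two quantities go to $0$ while the coefficients stay bounded.

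The uniform bound on the coefficients is precisely coercivity: there is $C_P$ with $C_{\dm}^B\le C_P$ for all $m$, so the coefficients above are bounded by $C_P$ and $C_P+1$. Since $\psi\in H^2_0(\O)$, consistency gives $S_{\dm}^B(\psi)\to 0$ at once. The term requiring a little care is $W_{\dm}^B(\hessian\psi)$: for this functional to be meaningful we need $\hessian\psi$ in the space $\wdspace(\O)$, i.e. $\hessian:B^{\tau}B\hessian\psi\in L^2(\O)$. I would deduce this from the equation solved by $\psi$: writing $\hb w=B\hessian w$ and using the identity $P^{\tau}\xi:\phi=\xi:P\phi$ recalled in the Notations, the bilinear form rewrites as $a(\psi,w)=\int_\O (B^{\tau}B\hessian\psi):\hessian w$, and two integrations by parts show that the weak formulation of \eqref{weak} is equivalent to $\hessian:B^{\tau}B\hessian\psi=F$ in the sense of distributions; since $F\in L^2(\O)$ this gives $\hessian\psi\in\wdspace(\O)$, and limit-conformity then yields $W_{\dm}^B(\hessian\psi)\to 0$.

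Combining these facts, all three right-hand sides in the error estimates converge to $0$, which proves $\Pi_{\dm}\psi_{\dm}\to\psi$ in $L^2(\O)$, $\nabla_{\dm}\psi_{\dm}\to\nabla\psi$ in $L^2(\O)^d$ and $\hb_{\dm}\psi_{\dm}\to\hb\psi$ in $L^2(\O)^{d\times d}$. The argument is essentially bookkeeping on Theorem~\ref{th:error.est.PDE}; the only step that is not purely mechanical---and the one I would check carefully---is that the argument $\hessian\psi$ of the limit-conformity functional indeed lies in $\wdspace(\O)$, which hinges on the elliptic equation for $\psi$ having an $L^2$ right-hand side.
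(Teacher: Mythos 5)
Your proof is correct and follows exactly the route the paper intends: the corollary is stated there as an immediate consequence of Theorem~\ref{th:error.est.PDE}, with coercivity bounding the constants $C_{\dm}^B$ uniformly, consistency killing $S_{\dm}^B(\psi)$, and limit-conformity killing $W_{\dm}^B(\hessian\psi)$. Your extra check that $\hessian\psi\in\wdspace(\O)$ (via the distributional identity $\hessian:B^{\tau}B\hessian\psi=F\in L^2(\O)$) is a detail the paper leaves implicit, and you handle it correctly.
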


\begin{remark}[Rates of convergence \cite{HDM_linear, DS_HDM}]\label{rates.PDE_HDM} Under regularity assumption $\psi \in H^4(\O)\cap H^2_0(\O)$, for low--order conforming FEMs, Adini and Morley ncFEMs and gradient recovery methods based on meshes with mesh parameter ``$h$'', $\mathcal O(h)$ estimates can be obtained for $W_\disc^B(\hessian \psi)$ and $S_\disc^B(\psi)$. Theorem \ref{th:error.est.PDE} then gives a linear rate of convergence for these methods. For FVM based on $\Delta$-adapted discretisations, Theorem \ref{th:error.est.PDE} provides an $\mathcal{O}(h^{1/4}|\ln(h)|)$ (in $d=2$) or $\mathcal O(h^{3/13})$ (in $d=3$) error estimate for the Hessian scheme based on the Hessian discretisation.
\end{remark}

\medskip

Let $\disc=(X_{\disc,0},\Pi_\disc,\nabla_\disc,\hbd)$ be a $B$-Hessian discretisation for fourth-order linear equations in the sense of Definition \ref{HD}. Let $\Uadh = \Uad \cap \Uh$, where $\Uh$ is a finite-dimensional subspace of $L^2(\omega)$. Given a $B$-Hessian discretisation $\disc$, the corresponding Hessian scheme for \eqref{continuous} seeks $(\by_\disc,\bp_\disc,\bu_h) \in X_{\disc,0} \times X_{\disc,0} \times \Uadh$ satisfying the KKT optimality conditions \cite{jl}:
\begin{subequations} \label{discrete_kkt.HDMcontrol}
	\begin{align}
	& a_{\disc}(\by_{\disc},w_{\disc}) = (f + \mathcal{C}\bu_{h}, \Pi_\disc w_{\disc}) \,
	&{\mbox{ for all }} \: w_{\disc} \in  X_{\disc,0},  \label{discrete_state.HDMcontrol} \\
	& a_{\disc}(w_{\disc},\bp_{\disc}) = (\Pi_\disc \by_{\disc}-\yd, \Pi_\disc w_{\disc})  \,&  {\mbox{ for all }} \: w_{\disc} \in  X_{\disc,0}, \label{discrete_adjoint.HDMcontrol} \\
	&(\mathcal{C}\Pi_\disc\bp_{\disc} +\alpha(\bu_h-\ud),v_h-\bu_h)\geq     0\,
	&{\mbox{ for all }} \: v_h \in  \Uadh, \label{opt_discrete.HDMcontrol}
	\end{align}
\end{subequations} 
where $$a_{\disc}(\by_{\disc},w_{\disc})= \int_\O \hbd \by_\disc:\hbd w_\disc \d\x.$$
%Arguing as in \cite[Theorem 2.25]{tf}, it is straightforward to see that \eqref{discrete_kkt.HDMcontrol} is equivalent to the
%following minimisation problem:
%\begin{equation}\label{min_pb_discrete}
%\begin{aligned}
%&  { \min_{u_h \in \Uadh} \frac{1}{2}\norm{\Pi_\disc y_\disc -\yd}{}^2 + \frac{\alpha}{2}\norm{u_h-\ud}{}^2
%	\quad \textrm{ subject to } } \\
%&  y_\disc\in X_{\disc,0} \mbox{ and, for all $w_\disc\in X_{\disc,0}$, }
%a_\disc(y_\disc,w_\disc)=(f+{\mathcal C}u_h,\Pi_\disc w_\disc).
%\end{aligned}
%\end{equation}
As in the continuous case, existence and uniqueness of a solution to \eqref{discrete_kkt.HDMcontrol} follows from standard variational theorems \cite{tf,jl}. 
 \section{Basic error estimate and super-convergence} \label{sec.mainresults.HDMcontrol}
This section is devoted to the basic convergence results and super-convergence results for the HDM applied to the control problem. The basic error estimate provides a linear rate of convergence on the control problem for the FEMs and the methods based on GR operator under regularity assumptions. Since control is approximated by piecewise constant functions, this is optimal. However, the superconvergence result can be obtained under a superconvergence assumption on the elliptic equations and a few additional assumptions. The proofs of the results stated in this section follow by adapting the corresponding proofs in the gradient discretisation method (GDM)\cite{GDM_control} to accounts for the Hessian discretisation method.% The proofs are provided in Section \ref{sec:proofs.HDMcontrol}.

 \subsection{Basic error estimate for the control problem}\label{sec.basicerr.HDMcontrol}
This section establishes the basic error estimate for the control problem. The proof utilizes the stability result of the Hessian schemes that can be obtained through the coercivity of the discrete operators.

  %	The following proposition enables to establish the basic error estimates for the control problem. 
 	 \begin{proposition}[Stability of Hessian schemes]\label{prop.stab.HDMcontrol}
 	 	 Let $\disc=(X_{\disc,0},\Pi_\disc,\nabla_\disc,\hbd)$ be a $B$-Hessian discretisation in the sense of the Definiton \ref{HD}. If $\psi_\disc$ is the solution to the Hessian scheme \eqref{base.Hessian scheme}, then
 	 	\be\label{stab.hessian}
 	 	\norm{\hbd \psi_\disc}{}
 	 	\le C_\disc^B\norm{F}{}, \quad \norm{\nabla_\disc \psi_\disc}{}\le 
 	 	(C_\disc^B)^2\norm{F}{}\quad\mbox{and}\quad\norm{\Pi_\disc \psi_\disc}{}\le 
 	 	(C_\disc^B)^2\norm{F}{}.
 	 	\ee
 	 \end{proposition}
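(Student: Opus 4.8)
The plan is to obtain the three bounds in \eqref{stab.hessian} in the stated order, each one feeding into the next, using only the definition of $C_\disc^B$ in \eqref{def.CD}, the Hessian scheme \eqref{base.Hessian scheme}, and the Cauchy--Schwarz inequality. First I would test the scheme \eqref{base.Hessian scheme} against $w_\disc = \psi_\disc$ itself, which yields
\[
\norm{\hbd \psi_\disc}{}^2 = \int_\O \hbd\psi_\disc : \hbd\psi_\disc \d\x = \int_\O F\,\Pi_\disc\psi_\disc \d\x \le \norm{F}{}\,\norm{\Pi_\disc\psi_\disc}{}.
\]
Now I would invoke the definition of $C_\disc^B$ in \eqref{def.CD}, which gives $\norm{\Pi_\disc\psi_\disc}{} \le C_\disc^B \norm{\hbd\psi_\disc}{}$; substituting this into the previous line gives $\norm{\hbd\psi_\disc}{}^2 \le C_\disc^B\norm{F}{}\norm{\hbd\psi_\disc}{}$, and dividing by $\norm{\hbd\psi_\disc}{}$ (the case $\psi_\disc=0$ being trivial, and note $\norm{\cdot}{\disc}=\norm{\hbd\cdot}{}$ is a genuine norm on $X_{\disc,0}$ by Definition \ref{HD}) produces the first estimate $\norm{\hbd\psi_\disc}{} \le C_\disc^B\norm{F}{}$.

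For the second and third estimates, I would again appeal to \eqref{def.CD}, which bounds both $\norm{\nabla_\disc\psi_\disc}{}$ and $\norm{\Pi_\disc\psi_\disc}{}$ by $C_\disc^B\norm{\hbd\psi_\disc}{}$. Combining this with the first estimate just proved,
\[
\norm{\nabla_\disc\psi_\disc}{} \le C_\disc^B\norm{\hbd\psi_\disc}{} \le (C_\disc^B)^2\norm{F}{},
\qquad
\norm{\Pi_\disc\psi_\disc}{} \le C_\disc^B\norm{\hbd\psi_\disc}{} \le (C_\disc^B)^2\norm{F}{},
\]
which are exactly the remaining two bounds in \eqref{stab.hessian}.

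There is essentially no obstacle here: the argument is the standard energy/coercivity stability estimate, and the only point requiring a word of care is the division by $\norm{\hbd\psi_\disc}{}$, handled by treating $\psi_\disc = 0$ separately. I do not expect this proposition to require anything beyond the three lines above; its role in the paper is to serve as the stability input for the basic error estimate on the control problem in the subsequent results.
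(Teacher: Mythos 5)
Your proposal is correct and follows exactly the same route as the paper: test \eqref{base.Hessian scheme} with $w_\disc=\psi_\disc$, apply Cauchy--Schwarz and the definition \eqref{def.CD} of $C_\disc^B$ to get the first bound, then deduce the other two from \eqref{def.CD} combined with the first. The only difference is that you spell out the (trivial) division step, which the paper leaves implicit.
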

 	 
 	 \begin{proof}
 	 	A choice of $w_\disc=\psi_\disc$ in \eqref{base.Hessian scheme}, a Cauchy inequality and the definition of $C_\disc^B$ given by \eqref{def.CD} lead to
 	 	\[
 	 	\norm{\hbd \psi_\disc}{}^2
 	 	\le \norm{F}{}\norm{\Pi_\disc \psi_\disc}{}
 	 	\le C_\disc^B \norm{F}{}\norm{\hbd \psi_\disc}{}.
 	 	\]
 	 	This provides the first inequality in \eqref{stab.hessian}. The remaining two estimates follow from \eqref{def.CD} and the first inequality in \eqref{stab.hessian}. \end{proof}

Recall $\varrho$ from \eqref{coer:B.} and the definition of $C_\disc^B$ from \eqref{def.CD}. The notation $X\lesssim Y$ abbreviates $X\le CY$ for some positive generic constant $C$, which depends only on $\O$, $\varrho$ and an upper bound of $C_\disc^B$.% defined by \eqref{def.CD}.
 
 \smallskip
 
Let $\Prh:L^2(\O) \rightarrow \Uh$  denotes the $L^2$ orthogonal projector on $\Uh$ for the standard scalar product. Recall $\wdspace(\O)$ from \eqref{def.WD}. For $\phi \in H^2(\O)$ with $\hessian \phi \in \wdspace(\O)$,
$$\WS_\disc^B(\phi):= W_\disc^B(\hessian \phi)+S_\disc^B(\phi).$$% The following theorem establishes basic error estimates for the control problem in the framework of HDM.
 \begin{theorem}[Control estimate]\label{theorem.control.DB.HDMcontrol}
 	Let $\disc$ be a $B$-Hessian discretisation in the sense of Definition \ref{HD}, $(\by,\bp,\bu)$ be the solution to \eqref{continuous} and
 	$(\by_\disc,\bp_\disc,\bu_h)$ be the solution to \eqref{discrete_kkt.HDMcontrol}.
Assume that
 	%\be\label{stab.proj.HDMcontrol}
 $	\Prh(\Uad)\subset \Uadh.$
 %	\ee
 	Then
 	\be\label{est.basic.u.HDMcontrol}
 	\begin{aligned}
 		\sqrt{\alpha}\norm{\bu-\bu_h}{}\lesssim{}& \sqrt{\alpha}\norm{\alpha^{-1}\bp-\Prh(\alpha^{-1}\bp)}{}+(\sqrt{\alpha}+1)\norm{\bu-\Prh\bu}{}\\
 		&+\sqrt{\alpha}\norm{\ud-\Prh\ud}{}+\frac{1}{\sqrt{\alpha}} \WS_\disc^B(\bp)+\WS_\disc^B(\by).
 	\end{aligned}
 	\ee
 \end{theorem}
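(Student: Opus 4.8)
The plan is to adapt the energy argument used for gradient-discretisation control estimates in \cite{GDM_control}: one pairs the two optimality inequalities against each other, extracts an $\alpha\norm{\bu-\bu_h}{}^2$ term with the right sign, and controls the remaining PDE-discretisation errors by Theorem \ref{th:error.est.PDE} and the stability bound of Proposition \ref{prop.stab.HDMcontrol}. First note that $\Uadh=\Uad\cap\Uh\subset\Uad$ gives $\bu_h\in\Uad$, so \eqref{opt_contcontrol} may be tested with $v=\bu_h$, while $\bu\in\Uad$ together with the hypothesis $\Prh(\Uad)\subset\Uadh$ gives $\Prh\bu\in\Uadh$, so \eqref{opt_discrete.HDMcontrol} may be tested with $v_h=\Prh\bu$. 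Adding the two inequalities, writing $\Prh\bu-\bu_h=(\Prh\bu-\bu)-(\bu_h-\bu)$ and collecting the terms carrying $\pm(\bu_h-\bu)$ produces the master inequality
\[
\alpha\norm{\bu-\bu_h}{}^2\le \big(\mathcal{C}^*(\bp-\Pi_\disc\bp_\disc),\,\bu_h-\bu\big)+\big(\mathcal{C}^*\Pi_\disc\bp_\disc+\alpha(\bu_h-\ud),\,\Prh\bu-\bu\big)=:T_1+T_2 .
\]

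For $T_1$ I would introduce three auxiliary elements of $X_{\disc,0}$: $\widehat{y}_\disc$, the solution of the Hessian scheme \eqref{base.Hessian scheme} for the state equation driven by the \emph{exact} control $\bu$; $\widetilde{p}_\disc$, the Hessian scheme with source $\by-\yd$ (the plain discretisation of the exact adjoint PDE); and $\widehat{p}_\disc$, the discrete adjoint with source $\Pi_\disc\widehat{y}_\disc-\yd$. Linearity of \eqref{base.Hessian scheme} makes $\widehat{y}_\disc-\by_\disc$ the discrete state with source $\mathcal{C}(\bu-\bu_h)$, $\widehat{p}_\disc-\bp_\disc$ the discrete adjoint with source $\Pi_\disc(\widehat{y}_\disc-\by_\disc)$, and $\widetilde{p}_\disc-\widehat{p}_\disc$ the discrete adjoint with source $\by-\Pi_\disc\widehat{y}_\disc$; testing these discrete equations against one another yields
\[
\begin{aligned}
\big(\mathcal{C}^*\Pi_\disc(\widehat{p}_\disc-\bp_\disc),\,\bu_h-\bu\big)&=-\norm{\Pi_\disc(\widehat{y}_\disc-\by_\disc)}{}^2,\\
\big(\mathcal{C}^*\Pi_\disc(\widetilde{p}_\disc-\widehat{p}_\disc),\,\bu_h-\bu\big)&=-\big(\by-\Pi_\disc\widehat{y}_\disc,\,\Pi_\disc(\widehat{y}_\disc-\by_\disc)\big).
\end{aligned}
\]
Splitting $\bp-\Pi_\disc\bp_\disc=(\bp-\Pi_\disc\widetilde{p}_\disc)+\Pi_\disc(\widetilde{p}_\disc-\widehat{p}_\disc)+\Pi_\disc(\widehat{p}_\disc-\bp_\disc)$ and summing, the first (negative) identity absorbs the second through Young's inequality, leaving $T_1\le\norm{\bp-\Pi_\disc\widetilde{p}_\disc}{}\,\norm{\bu-\bu_h}{}+\tfrac14\norm{\by-\Pi_\disc\widehat{y}_\disc}{}^2$. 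Theorem \ref{th:error.est.PDE} applied to the adjoint and state PDEs bounds $\norm{\bp-\Pi_\disc\widetilde{p}_\disc}{}\lesssim\WS_\disc^B(\bp)$ and $\norm{\by-\Pi_\disc\widehat{y}_\disc}{}\lesssim\WS_\disc^B(\by)$, and the first product is split once more against $\alpha\norm{\bu-\bu_h}{}^2$. Keeping (rather than discarding) the term $-\norm{\Pi_\disc(\widehat{y}_\disc-\by_\disc)}{}^2$ is exactly what lets $\WS_\disc^B(\by)$ appear in \eqref{est.basic.u.HDMcontrol} without an inverse power of $\alpha$.

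For $T_2$, since $\bu_h\in\Uh$ and $(\Prh\bu-\bu)\perp\Uh$ in $L^2(\omega)$, the $\alpha\bu_h$ contribution drops, so $T_2=(\mathcal{C}^*\Pi_\disc\bp_\disc-\alpha\ud,\,\Prh\bu-\bu)$. I would insert $\pm\mathcal{C}^*\bp$ and use the continuous projection identity \eqref{conts.proj} (equivalently, \eqref{opt_contcontrol} tested with $\Prh\bu\in\Uad$) to express $\mathcal{C}^*\bp-\alpha\ud$ through $\bu$ and $\ud-\alpha^{-1}\mathcal{C}^*\bp$, and bound the remaining $\mathcal{C}^*(\Pi_\disc\bp_\disc-\bp)$ factor using $\norm{\Pi_\disc\bp_\disc-\bp}{}\lesssim\norm{\bu-\bu_h}{}+\WS_\disc^B(\bp)+\WS_\disc^B(\by)$, its control-gap part being bounded by two applications of Proposition \ref{prop.stab.HDMcontrol}. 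This estimates $T_2$ by a combination of $\norm{\bu-\Prh\bu}{}$, $\norm{\ud-\Prh\ud}{}$ and $\norm{\alpha^{-1}\bp-\Prh(\alpha^{-1}\bp)}{}$ against the relevant powers of $\alpha$, plus an $\alpha\norm{\bu-\bu_h}{}^2$ remainder. Substituting the bounds for $T_1$ and $T_2$ into the master inequality, using Young's inequality once more and absorbing all the $\norm{\bu-\bu_h}{}^2$-terms into the left-hand side, yields \eqref{est.basic.u.HDMcontrol}.

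The conceptual ingredients (paired variational inequalities, auxiliary discrete states, Theorem \ref{th:error.est.PDE} and Proposition \ref{prop.stab.HDMcontrol}) are routine; the delicate point is the tracking of the powers of $\alpha$ — exploiting the favorable sign in the control-gap identity so that $\WS_\disc^B(\by)$ is not divided by $\alpha$, and treating $T_2$ via \eqref{conts.proj} so that $\norm{\bu-\Prh\bu}{}$ carries the weight $\sqrt{\alpha}+1$ rather than $\alpha^{-1/2}$. Everything else is Cauchy--Schwarz, Young and triangle-inequality bookkeeping transposed from the gradient-discretisation analysis of \cite{GDM_control}.
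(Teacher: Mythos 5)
Your proposal is correct and follows essentially the same route the paper intends: the paper omits the proof, stating only that it follows from the continuous and discrete KKT optimality conditions, the stability result of Proposition \ref{prop.stab.HDMcontrol} and the error estimate of Theorem \ref{th:error.est.PDE}, by adapting \cite[Theorem 3.1]{GDM_control} to the Hessian setting — which is exactly the argument you reconstruct (paired variational inequalities tested with $\bu_h$ and $\Prh\bu$, auxiliary discrete state/adjoint solutions, the sign-favourable control-gap identity, and $L^2$-orthogonality of $\Prh$ to handle the projection terms). The bookkeeping of the powers of $\alpha$ you describe is consistent with the stated estimate \eqref{est.basic.u.HDMcontrol}.
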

 \begin{proof}
 	 The proof follows from the continuous and discrete KKT optimality systems given by \eqref{opt_contcontrol} and \eqref{opt_discrete.HDMcontrol}, Proposition \ref{prop.stab.HDMcontrol} and Theorem \ref{th:error.est.PDE}. The analogous result for the GDM applied to the control problem is proved in \cite[Theorem 3.1]{GDM_control} and is therefore omitted. 
 \end{proof}
\begin{remark}\label{remark.basicerr.hdm}
	In particular, the result in Theorem \ref{theorem.control.DB.HDMcontrol} holds true for any bilinear form $a(\cdot,\cdot)$ defined on a subspace $X$ of $L^2(\O)$ and $a_\disc(\cdot,\cdot)$ defined on a discrete space $X_{\disc,0}$ (along with the continuous and discrete KKT optimality conditions), provided the following holds. %If $\psi$ is the solution to$a(\psi,\phi)=(f,\phi)$ for all $\phi \in X$ and $\psi_\disc$ is the corresponding solution to $a_\disc(\psi_\disc,\phi_\disc)=(f,\Pi_\disc\phi_\disc)$ for all $\phi_\disc \in X_{\disc,0}$, then the following stability and error estimate hold true:
	$$\norm{\Pi_\disc \psi_\disc}{} \lesssim \norm{F}{} \mbox{ and } \norm{\Pi_\disc \psi_\disc-\psi}{} \lesssim \WS_\disc(\psi),$$
	where $\psi$ is the solution to \eqref{weak} and $\psi_\disc$ is the solution to \eqref{base.Hessian scheme}.
\end{remark}
% \begin{remark}
Remark \ref{remark.basicerr.hdm} shows that the basic error estimates for the control problem using the GDM (for second-order problems) and HDM (for fourth-order problems) do not depend on the order of the partial differential equations. This is one of the major advantages of carrying out the analysis in the generic GDM and HDM framework.
% \end{remark}
%\medskip

 %	The following proposition establishes error estimates for the state and adjoint variables. 
 	 
 \begin{proposition}[State and adjoint error estimates]\label{theorem.state.adj.DB.HDMcontrol}
 	Let $\disc$ be a $B$-Hessian discretisation, $(\by,\bp,\bu)$ be the solution to \eqref{continuous} and
 	$(\by_\disc,\bp_\disc,\bu_h)$ be the solution to \eqref{discrete_kkt.HDMcontrol}.
 	Then% the following error estimates hold:
 	\begin{align}
 	\label{est.basic.y.HDMcontrol}
 	\norm{\Pi_\disc \by_\disc-\by}{}+\norm{\nabla_\disc \by_\disc-\nabla\by}{}+\norm{\hbd \by_\disc-\hb \by}{} \lesssim{}& \norm{\bu-\bu_h}{}+ \WS_\disc^B(\by),\\
 	\label{est.basic.p.HDMcontrol}
 	\norm{\Pi_\disc \bp_\disc-\bp}{}+\norm{\nabla_\disc \bp_\disc-\nabla\bp}{}+\norm{\hbd \bp_\disc-\hb \bp}{} \lesssim{}& \norm{\bu-\bu_h}{} + \WS_\disc^B(\by) + \WS_\disc^B(\bp).
 	\end{align}		
 \end{proposition}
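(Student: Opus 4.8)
The plan is to compare the discrete state $\by_\disc$ and adjoint $\bp_\disc$ with the Hessian-scheme solutions of the two linear elliptic problems obtained by freezing the control at the \emph{exact} value $\bu$, and then to combine the PDE error estimate of Theorem \ref{th:error.est.PDE} with the stability bound of Proposition \ref{prop.stab.HDMcontrol}. Since the whole argument is linear, none of the structure of the optimality conditions \eqref{opt_contcontrol}--\eqref{opt_discrete.HDMcontrol} is needed here; only the two state and the two adjoint equations enter, and the powers of $C_\disc^B$ coming from Proposition \ref{prop.stab.HDMcontrol} are absorbed into the generic constant hidden in $\lesssim$.

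For \eqref{est.basic.y.HDMcontrol}, I would first introduce $\hat y_\disc\in X_{\disc,0}$ solving $a_\disc(\hat y_\disc,w_\disc)=(f+\mathcal C\bu,\Pi_\disc w_\disc)$ for all $w_\disc\in X_{\disc,0}$. This is exactly the Hessian scheme \eqref{base.Hessian scheme} with datum $F=f+\mathcal C\bu\in L^2(\O)$, whose continuous weak solution is $\by$ by the state equation \eqref{state_contcontrol}; hence Theorem \ref{th:error.est.PDE} gives
\[
\norm{\Pi_\disc\hat y_\disc-\by}{}+\norm{\nabla_\disc\hat y_\disc-\nabla\by}{}+\norm{\hbd\hat y_\disc-\hb\by}{}\lesssim\WS_\disc^B(\by).
\]
Subtracting the defining relation of $\hat y_\disc$ from the discrete state equation \eqref{discrete_state.HDMcontrol} shows that $\by_\disc-\hat y_\disc$ is the Hessian-scheme solution with right-hand side $\mathcal C(\bu_h-\bu)$; since $\norm{\mathcal C(\bu_h-\bu)}{}=\norm{\bu_h-\bu}{L^2(\omega)}\le\norm{\bu-\bu_h}{}$, Proposition \ref{prop.stab.HDMcontrol} bounds each of $\norm{\Pi_\disc(\by_\disc-\hat y_\disc)}{}$, $\norm{\nabla_\disc(\by_\disc-\hat y_\disc)}{}$ and $\norm{\hbd(\by_\disc-\hat y_\disc)}{}$ by $\lesssim\norm{\bu-\bu_h}{}$. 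A triangle inequality then yields \eqref{est.basic.y.HDMcontrol}.

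For \eqref{est.basic.p.HDMcontrol} I would repeat this on the adjoint side, using that $a$, and hence $a_\disc(v_\disc,w_\disc)=\int_\O\hbd v_\disc:\hbd w_\disc\,\d\x$, is symmetric (a consequence of $a_{ijkl}=a_{klij}$), so that the adjoint equations fall under the \emph{forward} Hessian-scheme framework of Theorem \ref{th:error.est.PDE} without transposing anything. Let $\hat p_\disc\in X_{\disc,0}$ solve $a_\disc(\hat p_\disc,w_\disc)=(\by-\yd,\Pi_\disc w_\disc)$: this is the Hessian scheme with load $\by-\yd\in L^2(\O)$ whose continuous solution is $\bp$ by \eqref{adj_contcontrol}, so Theorem \ref{th:error.est.PDE} gives $\norm{\Pi_\disc\hat p_\disc-\bp}{}+\norm{\nabla_\disc\hat p_\disc-\nabla\bp}{}+\norm{\hbd\hat p_\disc-\hb\bp}{}\lesssim\WS_\disc^B(\bp)$. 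Subtracting from the discrete adjoint equation \eqref{discrete_adjoint.HDMcontrol} shows $\bp_\disc-\hat p_\disc$ is the Hessian-scheme solution with right-hand side $\Pi_\disc\by_\disc-\by$, so Proposition \ref{prop.stab.HDMcontrol} controls the three norms of $\bp_\disc-\hat p_\disc$ by $\lesssim\norm{\Pi_\disc\by_\disc-\by}{}$, which is itself bounded via \eqref{est.basic.y.HDMcontrol}. A final triangle inequality gives \eqref{est.basic.p.HDMcontrol}.

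The computations are routine; the only care points I foresee are the bookkeeping of which elliptic problem each auxiliary discrete function solves and the observation that symmetry of the bilinear form lets the adjoint problems be treated directly by Theorem \ref{th:error.est.PDE}. No serious obstacle is anticipated — this is the HDM counterpart of the state/adjoint estimates established for the GDM in \cite{GDM_control}.
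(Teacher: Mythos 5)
Your proposal is correct and follows exactly the route the paper intends: the paper omits the proof and refers to \cite[Proposition 3.1]{GDM_control}, whose argument is precisely your decomposition via the auxiliary discrete solutions with frozen exact data, combined with Theorem \ref{th:error.est.PDE}, Proposition \ref{prop.stab.HDMcontrol} and triangle inequalities. Your remarks on the symmetry of $a_\disc$ (so the adjoint problem is a forward Hessian scheme) and on $\norm{\mathcal C(\bu_h-\bu)}{}=\norm{\bu-\bu_h}{L^2(\omega)}$ are the only care points, and you handle both correctly.
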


 The proof is similar to that of \cite[Proposition 3.1]{GDM_control} and hence is skipped.
 \begin{remark}[Rates of convergence for the control problem]\label{rates.control.HDMcontrol}
 	Recalling Remark  \ref{rates.PDE_HDM}, under sufficient smoothness assumption on $\ud$, if $(\by,\bp,\bu)\in H^4(\O)^2 \times H^1(\O)$ and $\Uh$ is made of piecewise constant functions then \eqref{est.basic.u.HDMcontrol}, \eqref{est.basic.y.HDMcontrol} and \eqref{est.basic.p.HDMcontrol} give linear rates of convergence for low-order conforming FEMs, Adini and Morley ncFEMs and GR methods. Also for the FVM, Theorem \ref{theorem.control.DB.HDMcontrol} and Proposition \ref{theorem.state.adj.DB.HDMcontrol} provide an $\mathcal O(h^{1/4}|\ln(h)|)$ (in dimension $d=2$) or $\mathcal O(h^{3/13})$ (in dimension $d=3$) error estimate.
 \end{remark}
 %%%%%%%%%%%%%%%%%%%%%%%%%%%%%%%%%%%%%%%%%%%%%%%%%%%%%%%%%%%%
 \subsection{Super-convergence for post-processed controls} \label{sec:superconvergencel.HDMcontrol}
This section presents the super-convergence result for the HDM applied to control problem. A post-processing step of control and a few assumptions \assum{1}-\assum{4} (see below) help to derive an improved error estimate for the control, which is known as the superconvergence result. The ideas of this section are motivated by \cite[Section 3.2]{GDM_control} taking into account of the additional challenges offered by fourth-order problems in the HDM framework. These assumptions cover for example, the conforming FEMs, the Adini and Morley ncFEMs, and the GR methods.

\smallskip

 Let  $\mesh$ be a mesh of $\O$ such that each cell $K\in\mesh$ is star-shaped with respect to its centroid $\overline{\x}_K$. Assume that $\omega$ is a polygonal/polyhedral domain such that $\mesh_{|\omega}$ yields a mesh for $\omega$. The admissible set of controls $\Uad$ is defined by
 \begin{equation}\label{Uad:standard.HDMcontrol}
 \Uad:=\{u\in L^2(\omega) \,:\,a\le u(\x)\le b \mbox{ for almost every }\x \mbox{ in } \omega\},
 \end{equation}
where $a$ and $b$ are appropriate constants. The control variable is discretised by piecewise constant functions on this partition and is given by
 %The discrete space $\Uh$ is then defined as the
 %space of piecewise constant functions on this partition:
 \begin{equation}\label{Uh:standard.HDMcontrol}
 \Uh:=\{v:\O\to \R\,:\,{\mbox{ for all }} K\in\mesh\,,\;v_{|K}\mbox{ is a constant}\}.
 \end{equation}
% Also, $a=-\infty$ or $b=\infty$ is admissible. Note that \eqref{stab.proj.HDMcontrol} holds true for the choices \eqref{Uad:standard.HDMcontrol} and \eqref{Uh:standard.HDMcontrol} of $\Uad$ and $\Uh$.Owing to Remark \ref{rates.control.HDMcontrol}, for lower order conforming, non-conforming FE and gradient recovery schemes, under standard regularity assumptions the estimate \eqref{est.basic.u.HDMcontrol} provides an $\mathcal O(h)$ convergence rate on the control variable. Given that the optimal control $\bu$ is approximated by piece-wise constant functions, the convergence rates are optimal. However, the post-processing step establishes a super-convergence result for the control by using superconvergence assumption on the elliptic PDEs and following the ideas of \cite{CMAR}.
 
 \medskip

Let the projection operator $\Proj:L^1(\O)\to \Uh$ (orthogonal projection on piecewise constant functions on $\mesh$) be defined by %associated with the superconvergence result from Chapter 4, that is,
 \[
%\;{\mbox{ for all }} K\in\mesh\,,\quad
 (\Proj v)_{|K} :=\frac{1}{|K|}\int_K v\d\x,\quad  {\mbox{ for all }} (v,K)\in L^1(\O) \times \mesh.
 \]
%Also, \eqref{opt_discrete.HDMcontrol} is equivalent to the following pointwise relations \cite[Theorem 2.28]{tf}: for a.e.\ $x \in \O$,
%\be
%\label{discrete.proj}
%\bu_{h}(\x)=P_{[a,b]}\left(\Proj\left(\ud(\x) -\frac{{\mathcal C}^*}{\alpha}\Pi_\disc \bp_\disc\right) \right). 
%\ee
Let us impose the following assumptions in order to establish the superconvergence result. These are an extension of the assumptions for GDM to HDM.% The discussion on the assumptions for HDM are also stated in this section.
 
 \begin{itemize}
 	\item[\assum{1}][\emph{Approximation error}]
 	For $w\in H^2(\O)$, there exists $w_\mesh\in L^2(\O)$ such that:
 	\item[i)] 
 	 If $w \in H^4(\O) \cap H^2_0(\O)$ solves $\sum_{i,j,k,l=1}^{d}\partial_{kl}(a_{ijkl}\partial_{ij}w) =g\in L^2(\O)$, and $w_\disc$ is the solution to the corresponding Hessian scheme, then
 	\be\label{state:scv.HDMcontrol}
 	\norm{\Pi_\disc w_\disc-w_\mesh}{}  \lesssim  h^2\norm{g}{}.
 	\ee
 	\item[ii)] For any $w\in H^2(\O)$, it holds
 	\be\label{prop.M.1.HDMcontrol}
 	\begin{aligned}
 		{\mbox{ for all }} v_\disc\in X_{\disc,0}\,,\;
 		\big|(w-w_\mesh,\Pi_\disc v_\disc)\big|
 		\lesssim   h^2\norm{\Pi_\disc v_\disc}{}\norm{w}{H^2(\O)}
 	\end{aligned}
 	\ee
 	and
 	\be\label{prop.M.2.HDMcontrol}
 	\norm{\Proj(w-w_\mesh)}{}\lesssim   h^2\norm{w}{H^2(\O)}.
 	\ee
% \end{itemize}
 	\item[\assum{2}][\emph{Projection estimate}] The estimate $ \norm{\Pi_\disc v_\disc-\Proj(\Pi_\disc v_\disc)}{}  \lesssim  h\norm{\hbd v_\disc}{}$ holds for any $v_\disc\in X_{\disc,0}$.
 	\item[\assum{3}][\emph{Discrete Sobolev imbedding}] For all $v_\disc\in X_{\disc,0}$, it holds $\norm{\Pi_\disc v_\disc}{L^{\infty}(\O)} \lesssim   \norm{\hbd v_\disc}{}.$
 \end{itemize}
 Let  
 \[
 \mesh_2=\{K\in\mesh\,:\,\mbox{$\bu=a$ a.e. on $K$, or $\bu=b$ a.e. on $K$,
 	or $a<\bu<b$ a.e. on $K$}\}
 \]
 be the set of fully active or fully inactive cells, and
 $\mesh_1=\mesh\setminus\mesh_2$ be the set of cells where $\bu$ takes on the value $a$ (resp.\ $b$) as well as values greater than $a$ (resp.\ lower than $b$).
 For $i=1,2$, define $\O_{i,\mesh}:={\rm int}(\cup_{K\in\mesh_i}\overline{K})$.
 The space $W^{1,\infty}(\mesh_1)$ is the usual broken Sobolev space,
 endowed with its broken norm. The last assumption is:
%The last assumption is identical to the assumption \assum{4} in Chapter 4: using the notation $\O_{1,\mesh}$ defined there,
 \begin{itemize}
 	\item[\assum{4}] $|\O_{1,\mesh}|\lesssim h$ and $\bu_{|\O_{1,\mesh}}\in W^{1,\infty}(\mesh_1)$, where $|\cdot|$ denotes the Lebesgue measure in $\R^d$.
 \end{itemize}

 \medskip
%  To discuss the assumptions \assum{1} -- \assum{4}, let us consider the situation depending on the nature of $\Pi_\disc$. This nature drives the choices of $w_\mesh$ to ensure that the super-convergence result \eqref{state:scv.HDMcontrol} holds. We refer the reader to Chapter 4 for a detailed discussion on assumptions and post-processing controls for the second order elliptic PDEs in the framework of gradient discretisation method.
  
 % \medskip
The assumptions \assum{1}-\assum{3} are verified next for the conforming FEMs, the Adini and Morley ncFEMs, and the method based on GR operators. A discussion on \assum{4} can be found in \cite[Section 3.2.1]{GDM_control}. Recall the definition of $B$-Hessian discretisation $\disc$ for the aforementioned numerical schemes from Section \ref{sec.examplesofHDM}. 

\subsubsection{Conforming FEMs}
For the conforming FEMs, super-convergence result \eqref{state:scv.HDMcontrol} for elliptic equations usually holds with $w_\mesh=w$ (see \cite[Proposition 3.3]{DS_HDM}). In that case, \eqref{prop.M.1.HDMcontrol} and \eqref{prop.M.2.HDMcontrol} are trivially satisfied. Assumption \assum{2} follows from a simple Taylor expansion and the definition of $C_\disc^B$. The continuous Sobolev embedding $H^2(\O)\hookrightarrow L^\infty(\O)$ establishes \assum{3}.

\subsubsection{Nonconforming FEMs}  
A choice of $w_\mesh=w$ and \cite[Proposition 3.3]{DS_HDM} show the first inequality in \assum{1} for the Adini and Morley ncFEMs. For this $w_\mesh$, the remaining two estimates in \assum{1} are trivial. Since $\nabla_\disc v_\disc$ is the classical broken gradient (i.e. the gradient
of $\Pi_\disc v_\disc$ in each cell), the Taylor expansion and the definition of $C_\disc^B$ lead to \assum{2} for both Adini and Morley ncFEMs. Assumption \assum{3} is verified with the help of a companion operator $E_\disc:X_{\disc,0} \rightarrow H^2_0(\O)$. For all $v_\disc \in X_{\disc,0}$, the companion operator in \cite{Morley_plate,Brenner_ncfem} for Adini  and Morley ncFEMs satisfies
%The companion operator $E_\disc:X_{\disc,0} \rightarrow H^2_0(\O)$ for the Morley nonconforming FEM has been done in \cite{Morley_plate} and for the Adini ncFEM, $E_\disc$ has been studied in \cite{Brenner_ncfem}. In both cases, by recalling the coercivity property \eqref{coer:B.} of $B$, for $v_\disc \in X_{\disc,0}$, the companion operator $E_\disc$ satisfies
\be \label{eq.ED_MorleyAdini}
\norm{\Pi_\disc v_\disc -E_\disc v_\disc}{} \lesssim h^2 \norm{\hbd v_\disc}{}, \quad \norm{\hessian E_\disc v_\disc}{} \lesssim \norm{\hbd v_\disc}{}
\ee
with the coercivity property \eqref{coer:B.} of $B$ in the last step. Note that the range of $E_\disc$ is made of piecewise polynomial functions. A triangle inequality and the continuous Sobolev embedding $H^2(\O)\hookrightarrow L^\infty(\O)$ reveal
$$\norm{\Pi_\disc v_\disc}{L^\infty(\O)}\le  \norm{\Pi_\disc v_\disc-E_\disc v_\disc}{L^\infty(\O)}+ \norm{E_\disc v_\disc}{L^\infty(\O)}\lesssim\norm{\Pi_\disc v_\disc-E_\disc v_\disc}{L^\infty(\O)}+\norm{\hessian E_\disc v_\disc}{}.$$
Let $K \in \mesh$ be such that $ \norm{\Pi_\disc v_\disc-E_\disc v_\disc}{L^\infty(\O)}= \norm{\Pi_\disc v_\disc-E_\disc v_\disc}{L^\infty(K)}$. The inverse estimate \cite[Lemma 1.50]{DG_DA} provides $\norm{\Pi_\disc v_\disc-E_\disc v_\disc}{L^\infty(K)} \lesssim h_K^{-1}\norm{\Pi_\disc v_\disc-E_\disc v_\disc}{L^2(\cell)}$. This, the above displayed estimate and \eqref{eq.ED_MorleyAdini} imply
%An introduction of $E_\disc v_\disc$, a use of the triangle inequality, the inverse estimate, the above estimate and the continuous Sobolev embedding $H^2(\O)\hookrightarrow L^\infty(\O)$ lead to 
$\norm{\Pi_\disc v_\disc}{L^\infty(\O)}\lesssim \norm{\hbd v_\disc}{}.$
%\begin{align*}
%\norm{\Pi_\disc v_\disc}{L^\infty(\O)}&\le  \norm{\Pi_\disc v_\disc-E_\disc v_\disc}{L^\infty(\O)}+ \norm{E_\disc v_\disc}{L^\infty(\O)}\\
%&\le \sum_{\cell \in \mesh}h_\cell^{-1}\norm{\Pi_\disc v_\disc-E_\disc v_\disc}{L^2(\cell)}+ \norm{E_\disc v_\disc}{L^\infty(\O)}\\
%&\lesssim\norm{\hbd v_\disc}{}+\norm{\hessian E_\disc v_\disc}{}\lesssim \norm{\hbd v_\disc}{}.
%\end{align*}
Thus, assumption \assum{3} is satisfied by the Adini and Morley ncFEMs.

\subsubsection{Gradient Recovery Method}
  The superconvergence result of \assum{1}-i) is proved in \cite[Proposition 3.3]{DS_HDM} with $w_\mesh=w$. Since $w_\mesh=w$, both the estimates in \assum{1}-ii) hold trivially. The following inequality \cite[(4.4)]{HDM_linear} is useful to establish the assumptions \assum{2} and \assum{3}: for all $v_\disc \in X_{\disc,0}:=V_h$,
  \be \label{seminorm_qh}
  C_B^{-1}\sqrt{2}\norm{\hbd v_\disc}{}\ge  \norm{\nabla (Q_h \nabla v_\disc)}{}+\norm{Q_h\nabla v_\disc-\nabla v_\disc}{}.
  \ee
  where $V_h$ is the $H^1_0$-conforming $\Poly_1$ finite element space and $Q_h:L^2(\O) \rightarrow V_h$.  A Taylor expansion and the triangle inequality read $  \norm{\Pi_\disc v_\disc-\Proj(\Pi_\disc v_\disc)}{}\le{}h \norm{\nabla v_\disc}{}
  \le{}h\norm{\nabla v_\disc-Q_h\nabla v_\disc}{}+h\norm{Q_h\nabla v_\disc}{}$. Since $Q_h\nabla v_\disc \in H^1_0(\O)$, the Poincar\'{e} inequality shows that $\norm{Q_h\nabla v_\disc}{} \le {\rm diam}(\O)\norm{\nabla (Q_h\nabla v_\disc)}{}$. The combination of these estimates and \eqref{seminorm_qh} result in
  %and \eqref{seminorm_qh} to obtain the estimate in \assum{2} as
  \begin{align}
  \norm{\Pi_\disc v_\disc-\Proj(\Pi_\disc v_\disc)}{}%\le{}%&h \norm{\nabla v_\disc}{}
 % \le{}h\norm{\nabla v_\disc-Q_h\nabla v_\disc}{}+h\norm{Q_h\nabla v_\disc}{}\nonumber\\
 % \le{}&h \norm{\nabla v_\disc-Q_h\nabla v_\disc}{}+h{\rm diam}(\O)\norm{\nabla (Q_h\nabla v_\disc)}{}\nonumber\\
  \le{}& hC_B^{-1}\sqrt{2}\max(1,{\rm diam}(\O))
  \norm{\hbd v_\disc}{}\lesssim{} h\norm{\hbd v_\disc}{}.\nonumber
  \end{align}
  Hence \assum{2} follows for the gradient recovery method. 
  
  \smallskip
  
  To establish \assum{3}, let $X_h$ be the Hsieh-Clough-Toucher (HCT) conforming macro finite element \cite{ciarlet1978finite}. Note that the local degrees of freedom of HCT on triangle $\cell \in \mesh$ are the function values and first partial derivatives at the three vertices of $\cell$ in addition to the normal derivative at the midpoints of the edges of $\cell$. Let the set of vertices of $\mesh$ be denoted by $\mathcal V$ and $\centeredge$ be the midpoint of the edge $\edge$. A companion operator $E_\disc:X_{\disc,0} \rightarrow X_h \subset H^2_0(\O)$ is constructed as follows:
  
   Define $E_\disc v_\disc \in X_h$ by setting the degrees of freedom as follows:
   \begin{align}
  {\mbox{ for all }} p \in \mathcal{V}, \quad  E_\disc v_\disc(p)&=v_\disc(p)\label{ed.hct.function}\\
  {\mbox{ for all }} p \in \mathcal{V}, \quad \nabla E_\disc v_\disc(p)&=Q_h \nabla v_\disc(p)\label{ed.hct.gradient}\\
 {\mbox{ for all }} \edge \in \edges,\quad (\nabla E_\disc v_\disc\cdot n_\edge)(\centeredge)&=(Q_h \nabla v_\disc\cdot n_\edge) (\centeredge).\label{ed.hct.normal}
 % {\mbox{ for all }} \edge \in \edges,\quad \int_{\edge}\nabla E_\disc v_\disc\cdot n_\edge \d\s&=\int_{\edge}Q_h \nabla v_\disc\cdot n_\edge \d\s.\label{ed.hct.normal}
    \end{align}
 The triangle inequality shows that $\norm{\Pi_\disc v_\disc}{L^\infty(\O)}=\norm{ v_\disc}{L^\infty(\O)}\le  \norm{v_\disc-E_\disc v_\disc}{L^\infty(\O)}+ \norm{E_\disc v_\disc}{L^\infty(\O)}.$ Let $K \in \mesh$ be such that $\norm{v_\disc-E_\disc v_\disc}{L^\infty(\O)}=\norm{v_\disc-E_\disc v_\disc}{L^\infty(K)}$ and an inverse estimate implies $\norm{v_\disc-E_\disc v_\disc}{L^\infty(K)}\lesssim h_\cell^{-1}\norm{v_\disc-E_\disc v_\disc}{L^2(\cell)}$. A combination of these estimates and the continuous Sobolev embedding $H^2(\O)\hookrightarrow L^\infty(\O)$ leads to
 \be \label{gr.ed}
 \norm{\Pi_\disc v_\disc}{L^\infty(\O)} \lesssim h_\cell^{-1}\norm{v_\disc-E_\disc v_\disc}{L^2(\cell)}+ \norm{\hessian E_\disc v_\disc}{}.
 \ee
 Let $w$ be a polynomial function on $\cell$. The scaling argument \cite{Gallistl_Morley} reads
   \begin{align*}
   \norm{w}{L^2(\cell)}^2 &\approx \sum_{N \in \mathcal N(\cell)}^{}(\mbox{diam}(\cell))^{2(1+\mathfrak{O}(N))}(N(w))^2,
   \end{align*}
   where $\mathcal{N}(\cell)$ is the set of degrees of freedom and $\mathfrak{O}(N)$ is the order of differentiation in the degrees of freedom. Here, \eqref{ed.hct.function} is of order 0 and \eqref{ed.hct.gradient} and \eqref{ed.hct.normal} are of order 1. Since $v_\disc -E_\disc v_\disc \in \mathbb{P}_3$ on a submesh and $N(v_\disc -E_\disc v_\disc)=0$ if $N$ is of type \eqref{ed.hct.function},
  \begin{align*}
 \norm{v_\disc -E_\disc v_\disc}{L^2(\cell)}^2 &\approx \sum_{N \in \mathcal N(\cell)}^{}h_\cell^{4}(N(v_\disc -E_\disc v_\disc))^2.
 \end{align*}
 This and the definition of $E_\disc$ imply
\begin{align*}
 h_\cell^{-4}\norm{v_\disc -E_\disc v_\disc}{L^2(\cell)}^2 &\approx \sum_{p \in \mathcal{V}_\cell}^{}|(\nabla v_\disc -Q_h \nabla v_\disc)(p)|^2+
\sum_{\edge \in \edgescv}^{}|((\nabla v_\disc -Q_h \nabla v_\disc)\cdot n_\edge)(\centeredge)|^2,
\end{align*}
	where $\mathcal{V}_\cell$ is the set of vertices associated with $\cell$. The above displayed estimate, an inverse estimate and \eqref{seminorm_qh} lead to
	\begin{align*}
	h_\cell^{-4}\norm{v_\disc -E_\disc v_\disc}{L^2(\cell)}^2 \lesssim \norm{\nabla v_\disc -Q_h \nabla v_\disc}{L^{\infty}(\cell)^2}^2 &\lesssim h_\cell^{-2}\norm{\nabla v_\disc -Q_h \nabla v_\disc}{L^2(\cell)^2}^2 \lesssim h_\cell^{-2}\norm{\hbd v_\disc}{}^2.
	\end{align*}
	Therefore,  $h_\cell^{-1}\norm{v_\disc -E_\disc v_\disc}{L^2(\cell)} \lesssim \norm{\hbd v_\disc}{}.$ This and \eqref{gr.ed} reveal
\begin{align}
\norm{\Pi_\disc v_\disc}{L^\infty(\O)}&\lesssim \norm{\hbd v_\disc}{}+ \norm{\hessian E_\disc v_\disc}{}.\label{gr.ed.linfty}
\end{align}	
 An introduction of $\nabla Q_h\nabla v_\disc$, a triangle inequality and an inverse estimate  \cite[Lemma 1.44]{DG_DA} proves
 \begin{align}
 \norm{\hessian E_\disc v_\disc}{}&\lesssim h^{-1}\norm{\nabla E_\disc v_\disc- Q_h\nabla v_\disc}{}+\norm{\nabla Q_h\nabla v_\disc}{}.\label{HED}
 \end{align}
 Since $\nabla E_\disc v_{\disc}-Q_h \nabla v_{{\disc}_|K} \in \mathbb{P}_2(K)$, the definition of $E_\disc$ shows that the first term on the right-hand side of \eqref{HED} vanishes. This, \eqref{HED} and \eqref{seminorm_qh} result in $\norm{\hessian E_\disc v_\disc}{}\lesssim \norm{\hbd v_\disc}{}.$ A substitution of this in \eqref{gr.ed.linfty} yields
$$\norm{\Pi_\disc v_\disc}{L^\infty(\O)}\lesssim \norm{\hbd v_\disc}{}.$$
 Thus, assumption \assum{3} is verified for the gradient recovery methods.
  
  \medskip
  
 % Assumption \assum{4} is identical to the assumption \assum{4} in Chapter 4.

% We use the following extension of the notation \eqref{notation:lesssim}:
% \[
% \begin{aligned}
%\medskip

Recall the mesh regularity parameter $\eta$ from \eqref{reg:mesh}. The notation $X\lesssim_{\eta} Y$ abbreviates $X\le CY$ for some generic positive constant $C$, where $C$ depends only on $\O$, $B$, $\varrho$, an upper bound of $C_\disc$, and $\eta$.
% \end{aligned}
% \]
  \medskip
 
 %Assuming $\bp \in H^2(\O)$ (see Theorem \ref{thm.superconvergence.HDMcontrol}) and letting $\bp_\mesh$ be defined as in \assum{1},
 The post-processed continuous and discrete controls are defined by
 \begin{equation} 
 \begin{aligned}
 \tu(\x)={}&P_{[a,b]}\left(\Proj \ud(\x)-\frac{{\mathcal C}}{\alpha}\bp_\mesh(\x) \right),\quad \tu_{h}(\x)={}P_{[a,b]}\left(\Proj\ud(\x)-\frac{{\mathcal C}}{\alpha} \Pi_\disc \bp_\disc(\x)\right)
 \end{aligned}
 \label{Projection2.HDMcontrol}
 \end{equation}
 where $\bp_\mesh$ is defined as in \assum{1}.
 
 \medskip

 \noindent The super-convergence result for post-processed controls is stated next. The proof is obtained by modifying the proof of \cite[Theorem 3.4]{GDM_control} for GDM to HDM by adapting the assumptions \assum{1}-\assum{4} and is therefore omitted.
 \begin{theorem}[Super-convergence for post-processed controls] \label{thm.superconvergence.HDMcontrol}
 	Let $\disc$ be a $B$-Hessian discretisation and $\mesh$ be a mesh.
 	Assume that 
 	\begin{itemize}
 		\item $\Uad$ and $\Uh$ are given by \eqref{Uad:standard.HDMcontrol} and \eqref{Uh:standard.HDMcontrol},
 		\item \assum{1}--\assum{4} hold,
 		\item $\by$ and $\bp$ belong to $H^4(\O)$, $\ud$ belongs to $H^2(\O)$,
 	%	\item $\yd$ and $f$ belong to $H^1(\O)$,
 	\end{itemize}
 	and let $\tu$, $\tu_h$ be the post-processed controls defined by \eqref{Projection2.HDMcontrol}.
 	Then there exists $C$ depending only on $\alpha$ such that
 	\be\label{eq:supercv.HDMcontrol}
 	\norm{\tu-\tu_{h}}{} \lesssim_{\eta}Ch^{2}\left(\norm{\bu}{W^{1,\infty}(\mesh_1)}+\mathcal F(a,b,\yd,\ud,f,\by,\bp)\right),
 	\ee
 	where $\mathcal F(a,b,\yd,\ud,f,\by,\bp)$ is defined by
 	\begin{align*}
 	\mathcal F(a,b,\yd,\ud,f,\by,\bp):={}&\minmod(a,b)+\norm{\yd}{}+  \norm{\ud}{H^2(\O)}+\norm{f}{}+\norm{\by}{H^4(\O)}+\norm{\bp}{H^4(\O)}
 	\end{align*}
 	with $\minmod(a,b)=0$ if $ab\le 0$ and $\minmod(a,b)=\min(|a|,|b|)$ otherwise.
 \end{theorem}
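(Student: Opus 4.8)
The plan is to bound $\norm{\tu-\tu_h}{}$ by first exploiting the Lipschitz continuity of the projection $P_{[a,b]}$ and the definitions \eqref{Projection2.HDMcontrol}, which reduce the problem to controlling $\frac{1}{\alpha}\norm{\bp_\mesh-\Pi_\disc\bp_\disc}{}$ (the $\Proj\ud$ terms cancel exactly). The key observation is that $\bp_\mesh-\Pi_\disc\bp_\disc$ is the error in the \emph{adjoint} Hessian scheme, measured against the special reconstruction $\bp_\mesh$ from \assum{1}. So I would first write $\bp_\mesh-\Pi_\disc\bp_\disc = (\bp_\mesh - \Pi_\disc\tilde p_\disc) + \Pi_\disc(\tilde p_\disc-\bp_\disc)$, where $\tilde p_\disc$ solves the Hessian scheme with the exact right-hand side $\by-\yd$ of the continuous adjoint equation \eqref{adj_contcontrol}. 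The first summand is $\mathcal O(h^2)$ by \assum{1}-i) (with $g=\by-\yd\in L^2$, using $\by\in H^4\subset L^\infty$ and $\yd\in H^2$), while the second is handled by the stability of Hessian schemes (Proposition \ref{prop.stab.HDMcontrol}): $\norm{\Pi_\disc(\tilde p_\disc-\bp_\disc)}{}\lesssim \norm{(\by-\yd)-(\Pi_\disc\by_\disc-\yd)}{} = \norm{\by-\Pi_\disc\by_\disc}{}$.

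Thus everything boils down to an $\mathcal O(h^2)$ bound on the \emph{state} error $\norm{\by-\Pi_\disc\by_\disc}{}$. Here I would again split via the state analogue of \assum{1}-i): let $\tilde y_\disc$ solve the Hessian scheme with right-hand side $f+\mathcal C\bu$; then $\by-\Pi_\disc\tilde y_\disc = \mathcal O(h^2)\norm{f+\mathcal C\bu}{}$ by \assum{1}-i), and $\norm{\Pi_\disc(\tilde y_\disc-\by_\disc)}{}\lesssim\norm{\mathcal C\bu-\mathcal C\bu_h}{}\le\norm{\bu-\bu_h}{}$ by stability. The term $\norm{\bu-\bu_h}{}$ is controlled by the basic control estimate \eqref{est.basic.u.HDMcontrol} of Theorem \ref{theorem.control.DB.HDMcontrol}, which under the stated regularity ($\by,\bp\in H^4$, $\ud\in H^2$) and Remark \ref{rates.PDE_HDM} is itself $\mathcal O(h)$ — not yet $\mathcal O(h^2)$. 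To upgrade to the quadratic rate one has to close a bootstrap: the improved control error uses the discrete Sobolev imbedding \assum{3} to estimate $(\mathcal C\Pi_\disc\bp_\disc-\mathcal C\bp_\mesh, \bu-\Proj\bu)$ type cross-terms pointwise on the boundary-layer cells, where \assum{4} supplies $|\O_{1,\mesh}|\lesssim h$ and $\bu\in W^{1,\infty}(\mesh_1)$ to extract the extra power of $h$; the projection estimate \assum{2} converts $\norm{\Pi_\disc\bp_\disc-\Proj(\Pi_\disc\bp_\disc)}{}$ into $h\norm{\hbd\bp_\disc}{}$, contributing another factor $h$.

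The main obstacle is precisely this last bootstrap step: decomposing the variational inequality defect $\big(\mathcal C\Pi_\disc\bp_\disc+\alpha(\bu_h-\ud),\tu_h-\bu_h\big)-\big(\mathcal C^*\bp+\alpha(\bu-\ud),\tu_h-\bu\big)$ over $\mesh_1$ and $\mesh_2$ separately, and showing that on the "nice" cells $\mesh_2$ the first-order optimality makes the leading term vanish (so only $\mathcal O(h^2)$ consistency and projection errors remain), while on $\mesh_1$ the crude bound $\norm{\cdot}{L^2(\O_{1,\mesh})}\lesssim h^{1/2}\norm{\cdot}{L^\infty}$ together with the discrete imbedding \assum{3} and $\bu\in W^{1,\infty}(\mesh_1)$ still yields $\mathcal O(h^2)$. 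This is exactly the structure of \cite[Theorem 3.4]{GDM_control}; the fourth-order novelty is only that the discrete Sobolev imbedding $\norm{\Pi_\disc v_\disc}{L^\infty}\lesssim\norm{\hbd v_\disc}{}$ replaces the $H^1$-imbedding of the GDM setting and is nontrivially verified above (in particular via the newly constructed HCT companion operator for the GR method), so the proof transfers essentially verbatim once \assum{1}--\assum{4} are in force — which is why it is omitted in the paper.
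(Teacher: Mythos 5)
Your overall route coincides with the paper's: the paper gives no proof beyond ``adapt \cite[Theorem 3.4]{GDM_control} using \assum{1}--\assum{4}'', and your reduction of $\norm{\tu-\tu_h}{}$ to $\alpha^{-1}\norm{\bp_\mesh-\Pi_\disc\bp_\disc}{}$ via the $1$-Lipschitz continuity of $P_{[a,b]}$ (with the $\Proj\ud$ terms cancelling), followed by a variational-inequality argument split over $\mesh_1$ and $\mesh_2$, is exactly the intended structure, and you correctly identify \assum{2}--\assum{4} and the discrete Sobolev imbedding as the decisive ingredients.

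There is, however, one step in your intermediate chain that cannot be closed as written. You bound the adjoint error by the state error, the state error by $\norm{\bu-\bu_h}{}$ via Proposition \ref{prop.stab.HDMcontrol}, and then propose to ``upgrade'' $\norm{\bu-\bu_h}{}$ from $\mathcal O(h)$ to $\mathcal O(h^2)$ by a bootstrap. No such upgrade exists: since $\bu_h$ is piecewise constant and $\bu$ is not, $\norm{\bu-\bu_h}{}\ge\norm{\bu-\Proj\bu}{}$ is genuinely only first order (the paper says as much in its introduction). The superconvergent quantity is $\norm{\Proj\bu-\bu_h}{}$, not $\norm{\bu-\bu_h}{}$, so the argument must be wired so that the raw control error never appears: split $\bu-\bu_h=(\bu-\Proj\bu)+(\Proj\bu-\bu_h)$, prove $\norm{\Proj\bu-\bu_h}{}\lesssim h^2(\cdots)$ first (this is where the $\mesh_1$/$\mesh_2$ decomposition, \assum{3} and \assum{4} enter, testing the discrete inequality with $\Proj\bu\in\Uadh$), and absorb the $\mathcal O(h)$ piece $\bu-\Proj\bu$ only inside duality pairings $(\mathcal C(\bu-\Proj\bu),\Pi_\disc w_\disc)=(\mathcal C(\bu-\Proj\bu),\Pi_\disc w_\disc-\Proj(\Pi_\disc w_\disc))$, where $L^2$-orthogonality to $\Uh$ and \assum{2} supply the missing factor of $h$. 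In your sketch \assum{2} is invoked only for $\Pi_\disc\bp_\disc$ inside the ``bootstrap'', whereas it is indispensable already in the state and adjoint reductions. A second, related imprecision: \assum{1}-i) controls $\norm{\Pi_\disc\tilde y_\disc-\by_\mesh}{}$, not $\norm{\Pi_\disc\tilde y_\disc-\by}{}$; in the abstract framework $\by-\by_\mesh$ is only controlled weakly through \eqref{prop.M.1.HDMcontrol}--\eqref{prop.M.2.HDMcontrol}, so when the state error feeds the adjoint right-hand side one must again keep the pairing against $\Pi_\disc w_\disc$ rather than pass to norms of $\by-\Pi_\disc\by_\disc$. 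With these two reroutings your outline matches the omitted proof.
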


% The super-convergence of the state and adjoint variables is stated below. 
 
 \begin{corollary}[Super-convergence for the state and adjoint variables] \label{cor.superconvergence.HDMcontrol}
 	Let $(\by,\bp)$ and $(\by_\disc,\bp_\disc)$ be the solutions to \eqref{state_contcontrol}--\eqref{adj_contcontrol} and  \eqref{discrete_state.HDMcontrol}--\eqref{discrete_adjoint.HDMcontrol}. Under the assumptions of Theorem \ref{thm.superconvergence.HDMcontrol}, the following error estimates hold, with $C$ depending only on $\alpha$:
 	\begin{align}
 	\label{eq_supercv.y.HDMcontrol}
 	\norm{\by_\mesh-\Pi_\disc \by_\disc}{}\lesssim_{\eta}{}&
 	Ch^{2}\left(\norm{\bu}{W^{1,\infty}(\mesh_1)}+\mathcal F(a,b,\yd,\ud,f,\by,\bp)\right),\\
 	\label{eq_supercv.p.HDMcontrol}
 	\norm{\bp_\mesh-\Pi_\disc \bp_\disc}{}\lesssim_{\eta}{}&
 	Ch^{2}\left(\norm{\bu}{W^{1,\infty}(\mesh_1)}+\mathcal F(a,b,\yd,\ud,f,\by,\bp)\right),
 	\end{align}
 	where $\by_\mesh$ and $\bp_\mesh$ are defined as in \assum{1}.
 \end{corollary}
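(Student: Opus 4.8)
The plan is to follow the line of proof of Theorem~\ref{thm.superconvergence.HDMcontrol}, combined with the stability estimate of Proposition~\ref{prop.stab.HDMcontrol}, the elliptic super-convergence \assum{1}-i), and a duality argument that is available because the symmetry $a_{ijkl}=a_{klij}$ makes the state operator formally self-adjoint. First I would introduce the auxiliary discrete state $\widehat y_\disc\in X_{\disc,0}$, the solution of the Hessian scheme \eqref{base.Hessian scheme} with right-hand side $f+\mathcal C\bu$ --- i.e.\ with the \emph{continuous} optimal control --- and split
\[
\norm{\by_\mesh-\Pi_\disc\by_\disc}{}\le\norm{\by_\mesh-\Pi_\disc\widehat y_\disc}{}+\norm{\Pi_\disc(\widehat y_\disc-\by_\disc)}{}.
\]
Since $\by\in H^4(\O)\cap H^2_0(\O)$ solves $\sum_{i,j,k,l=1}^{d}\partial_{kl}(a_{ijkl}\partial_{ij}\by)=f+\mathcal C\bu$, assumption \assum{1}-i) bounds the first summand by $\lesssim h^2\norm{f+\mathcal C\bu}{}$. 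Using the pointwise relation \eqref{conts.proj}, the $1$-Lipschitz continuity of $P_{[a,b]}$ and $|P_{[a,b]}(0)|=\minmod(a,b)$, one gets $\norm{\bu}{L^2(\omega)}\lesssim\minmod(a,b)+\norm{\ud}{}+\alpha^{-1}\norm{\bp}{}$, hence $\norm{f+\mathcal C\bu}{}\lesssim\mathcal F(a,b,\yd,\ud,f,\by,\bp)$ and the first summand is $\le Ch^2\mathcal F$ with $C$ depending on $\alpha$.

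For the second summand, subtracting \eqref{discrete_state.HDMcontrol} from the scheme defining $\widehat y_\disc$ shows that $\widehat y_\disc-\by_\disc$ is the Hessian-scheme solution with right-hand side $\mathcal C(\bu-\bu_h)$; the bare stability estimate only gives $\norm{\Pi_\disc(\widehat y_\disc-\by_\disc)}{}\lesssim\norm{\bu-\bu_h}{}$, which is merely $\mathcal O(h)$, so I would argue by duality. For $g\in L^2(\O)$ with $\norm{g}{}\le1$, let $\zeta_\disc$ solve \eqref{base.Hessian scheme} with right-hand side $g$; the symmetry of $a_\disc(\cdot,\cdot)$ yields $(\Pi_\disc(\widehat y_\disc-\by_\disc),g)=a_\disc(\widehat y_\disc-\by_\disc,\zeta_\disc)=(\mathcal C(\bu-\bu_h),\Pi_\disc\zeta_\disc)$. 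Writing $\Pi_\disc\zeta_\disc=\zeta_\mesh+(\Pi_\disc\zeta_\disc-\zeta_\mesh)$, with $\zeta\in H^2_0(\O)$ the elliptic solution with datum $g$ and $\zeta_\mesh$ the function of \assum{1}, the remainder contributes $\lesssim h^2\norm{\bu-\bu_h}{}\lesssim h^2$ by \assum{1}-i) (under the elliptic regularity that also underpins \assum{1}-i)), while $(\mathcal C(\bu-\bu_h),\zeta_\mesh)$ is treated exactly as the analogous term in the proof of Theorem~\ref{thm.superconvergence.HDMcontrol}: replace $\zeta_\mesh$ by $\zeta\in H^2(\O)\hookrightarrow L^\infty(\O)$ up to $\mathcal O(h^3)$ via \assum{1}-ii), then use the $L^2$-orthogonality of $\bu-\Proj\bu$ to piecewise constants with $\norm{\bu-\Proj\bu}{}\lesssim h$ and $\norm{\zeta-\Proj\zeta}{}\lesssim h$ on $\O_{2,\mesh}$, and on the transition set $\O_{1,\mesh}$ invoke $|\O_{1,\mesh}|\lesssim h$ and $\bu\in W^{1,\infty}(\mesh_1)$ from \assum{4} (which forces $\norm{\bu-\bu_h}{L^\infty(\O_{1,\mesh})}\lesssim h$). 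Taking the supremum over $g$ gives $\norm{\Pi_\disc(\widehat y_\disc-\by_\disc)}{}\lesssim_{\eta}Ch^2(\norm{\bu}{W^{1,\infty}(\mesh_1)}+\mathcal F)$, and combined with the first summand this proves \eqref{eq_supercv.y.HDMcontrol}.

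For \eqref{eq_supercv.p.HDMcontrol}, let $\widehat p_\disc$ solve \eqref{base.Hessian scheme} with right-hand side $\by-\yd$; since $\bp\in H^4(\O)\cap H^2_0(\O)$, \assum{1}-i) gives $\norm{\bp_\mesh-\Pi_\disc\widehat p_\disc}{}\lesssim h^2\norm{\by-\yd}{}\le Ch^2\mathcal F$. Subtracting \eqref{discrete_adjoint.HDMcontrol} shows that $\widehat p_\disc-\bp_\disc$ solves \eqref{base.Hessian scheme} with right-hand side $\by-\Pi_\disc\by_\disc=(\by-\by_\mesh)+(\by_\mesh-\Pi_\disc\by_\disc)$: the second summand is $\mathcal O(h^2)$ in $L^2(\O)$ by the part just proved, hence contributes $\mathcal O(h^2)$ through Proposition~\ref{prop.stab.HDMcontrol}, while the first summand is handled by the same duality identity, its contribution being $(\by-\by_\mesh,\Pi_\disc\zeta_\disc)\lesssim h^2\norm{\Pi_\disc\zeta_\disc}{}\norm{\by}{H^2(\O)}\lesssim h^2\mathcal F$ by \assum{1}-ii) and $\norm{\Pi_\disc\zeta_\disc}{}\lesssim1$. (When $\by_\mesh=\by$, as holds for all the discretisations covered by \assum{1}--\assum{3}, this last step is unnecessary: stability applied to the $\mathcal O(h^2)$ right-hand side $\by-\Pi_\disc\by_\disc$ suffices.) This yields \eqref{eq_supercv.p.HDMcontrol}, and Corollary~\ref{cor.superconvergence.HDMcontrol} follows, with $C$ depending only on $\alpha$ as in Theorem~\ref{thm.superconvergence.HDMcontrol}.

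The step I expect to be the main obstacle is the estimation of $(\mathcal C(\bu-\bu_h),\zeta_\mesh)$ on the transition region $\O_{1,\mesh}$ --- the delicate point already present in the control super-convergence analysis, where \assum{4} (both the measure bound $|\O_{1,\mesh}|\lesssim h$ and the broken $W^{1,\infty}$-regularity of $\bu$) is indispensable. A secondary issue is that the duality solution $\zeta$ must enjoy the elliptic regularity underlying \assum{1}-i) and the $H^4$-hypotheses on $\by,\bp$; once these are granted, the remaining work is a routine combination of stability, the duality identity, and \assum{1}--\assum{2}.
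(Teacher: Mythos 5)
Your treatment of the first summand is exactly the intended one: introduce the auxiliary discrete solution $\widehat y_\disc$ with datum $f+\mathcal C\bu$, apply \assum{1}-i) (legitimate since $\by\in H^4(\O)\cap H^2_0(\O)$ is assumed), and control $\norm{\bu}{L^2(\omega)}$ through the projection formula \eqref{conts.proj}. The gap is in your treatment of $\norm{\Pi_\disc(\widehat y_\disc-\by_\disc)}{}$ by duality. First, your argument needs \assum{1}-i) for the dual solution $\zeta$ associated with an \emph{arbitrary} datum $g\in L^2(\O)$, i.e.\ full $H^4$ elliptic regularity of the fourth-order operator. This is not among the hypotheses of the corollary (only the specific solutions $\by,\bp$ are assumed to lie in $H^4$), and for the biharmonic problem it fails even on convex polygons; the paper's route never invokes regularity of an auxiliary dual problem. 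Second, your claim that \assum{4} ``forces $\norm{\bu-\bu_h}{L^\infty(\O_{1,\mesh})}\lesssim h$'' is unjustified: \assum{4} constrains $\bu$ only, and pointwise closeness of the discrete optimal control $\bu_h$ to $\bu$ is an output of the superconvergence analysis, not an input. The term you are trying to reproduce from the proof of Theorem \ref{thm.superconvergence.HDMcontrol} is not ``the same term''; there it is paired with the specific adjoint structure and ultimately rests on the intermediate estimate $\norm{\Proj\bu-\bu_h}{}\lesssim_{\eta} Ch^2(\norm{\bu}{W^{1,\infty}(\mesh_1)}+\mathcal F)$, which that proof establishes before deducing \eqref{eq:supercv.HDMcontrol}.

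That intermediate estimate is also what makes the duality detour unnecessary, and it is how the paper (following \cite[Corollary 3.7]{GDM_control}) argues. Testing the error equation with $w_\disc=\widehat y_\disc-\by_\disc$ gives $\norm{\hbd w_\disc}{}^2=(\mathcal C(\bu-\bu_h),\Pi_\disc w_\disc)$; split $\bu-\bu_h=(\bu-\Proj\bu)+(\Proj\bu-\bu_h)$. For the first piece, the $L^2$-orthogonality of $\bu-\Proj\bu$ to piecewise constants and \assum{2} give a bound by $\norm{\bu-\Proj\bu}{}\,h\norm{\hbd w_\disc}{}$, with $\norm{\bu-\Proj\bu}{}\lesssim h(\norm{\ud}{H^1(\O)}+\alpha^{-1}\norm{\bp}{H^1(\O)})$ since $\bu=P_{[a,b]}(\ud-\alpha^{-1}\mathcal C\bp)\in H^1(\omega)$; for the second piece, Cauchy--Schwarz, the definition of $C_\disc^B$ and the intermediate estimate give $Ch^2(\cdots)\norm{\hbd w_\disc}{}$. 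Dividing by $\norm{\hbd w_\disc}{}$ and using \eqref{def.CD} yields \eqref{eq_supercv.y.HDMcontrol} with no auxiliary regularity and no revisiting of $\O_{1,\mesh}$. Your adjoint step is then essentially sound: the difference $\widehat p_\disc-\bp_\disc$ solves the scheme with datum $\by-\Pi_\disc\by_\disc=(\by-\by_\mesh)+(\by_\mesh-\Pi_\disc\by_\disc)$, the second part is handled by Proposition \ref{prop.stab.HDMcontrol} and the state estimate, and the first part by \eqref{prop.M.1.HDMcontrol} tested against $\Pi_\disc(\widehat p_\disc-\bp_\disc)$ itself (or a discrete dual), which requires only discrete stability. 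In short: keep your decompositions, drop the duality with arbitrary $g$, and route the control contribution through $\norm{\Proj\bu-\bu_h}{}$ rather than through $\norm{\bu-\bu_h}{L^\infty(\O_{1,\mesh})}$.
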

The proof is similar to that of \cite[Corollary 3.7]{GDM_control} and hence is skipped.

\medskip

 The linear control problem considered in this paper helps us to extend our analysis to control problem governed by non-linear elliptic equations in the HDM framework, which is a plan of future work. 	

	\section{Numerical results}\label{sec.exampleHDMcontrol}
	 This section deals with the results of the numerical experiments for distributed optimal control problem governed by the biharmonic equation to support the theoretical estimates obtained in the previous section with $\omega=\Omega$, that is, $\mathcal{C}=\rm{Id}$. Three specific schemes are used for the state and adjoint variables: Adini ncFEM, GR method and FVM. The control variable is discretised using piecewise constant functions. The discrete solution $(\by_\disc,\bp_\disc,\bu_h)$ is computed by using the primal-dual active set algorithm \cite[Section 2.12.4]{tf}. 
	 
	 \smallskip
	 
	 The model problem is constructed in such a way that the exact solution $(\by,\bp,\bu)$ is known. For a given $\bp$ and $\bu_d$, the continuous control $\bu$ is computed using the projection relation \eqref{conts.proj} and the discrete control $\bu_h$ is defined by
	 $$%\label{discrete.proj}
	 \bu_{h}(\x):=P_{[a,b]}\left(\Proj\left(\ud(\x) -\frac{1}{\alpha}\Pi_\disc \bp_\disc(x)\right) \right)
	 $$
	 which follows from the discrete optimality condition \eqref{opt_discrete.HDMcontrol}.The source term $f$ and the desired state  $\yd$ are the computed using
	 \begin{equation*}
	 f=\Delta^2 \by-\bu, \quad \yd=\by-\Delta^2 \bp.
	 \end{equation*}
	 Let the relative errors be defined by
	 \[
	 \err_\disc(\by):=\frac{\norm{\Pi_\disc \by_\disc -\by}{}}{\norm{\by}{}},\quad
	 \err_\disc(\nabla\by) :=\frac{\norm{\nabla_\disc \by_\disc -\nabla\by}{}}{\norm{\nabla\by}{}}, \quad \err_\disc(\hessian\by) :=\frac{\norm{\hbd \by_\disc -\hessian\by}{}}{\norm{\hessian\by}{}}
	 \]
	 \[
	 \err_\disc(\bp):=\frac{\norm{\Pi_\disc \bp_\disc -\bp}{}}{\norm{\bp}{}},\quad
	 \err_\disc(\nabla\bp) :=\frac{\norm{\nabla_\disc \bp_\disc -\nabla\bp}{}}{\norm{\nabla\bp}{}}, \quad \err_\disc(\hessian\bp) :=\frac{\norm{\hbd \bp_\disc -\hessian\bp}{}}{\norm{\hessian\bp}{}}
	 \]
	 \[
	 \err(\bu):=\frac{\norm{\bu_h -\bu}{}}{\norm{\bu}{}}
	 \quad\mbox{ and }\quad
	 \err(\tu) :=\frac{\norm{\tu_h - \tu}{}}{\norm{\bu}{}},
	 \]%See Chapter 2 for a discussion of numerical results based on GR and FV schemes for the biharmonic problem with clamped boundary conditions.
	where $\tu$ and $\tu_h$ are the continuous and discrete post-processed control from \eqref{Projection2.HDMcontrol}. 

	 % \subsubsection{Example 1} \label{example1}
	
	 \subsection{Gradient Recovery Method}
	 Here, $X_{\disc,0}$ is the conforming $\mathbb{P}_1$ finite element space on triangular meshes
	 and the implementation was done following the ideas of \cite{BL_stabilizedMFEM} where the gradient recovery operator is constructed using biorthogonal bases. We have already observed in \cite[Section 6.1]{HDM_linear} that the stabilisation factor has only very small impact on the relative errors for the biharmonic problem and thus, as expected, so does the scheme for the entire control problem. Hence, in this section, the stabilisation factor is chosen to be 1. 
	 
	 \subsubsection{Example 1}\label{example1}
	   This example is taken from \cite[Section 4.2]{SRRRWW}. Let the computational domain  be $\O=\left(0,1\right)^2$. The data in the optimal distributed control problem are chosen as follows:
	 \begin{align*}
	 &\by=\sin^2(\pi x)\sin^2(\pi y), \quad \bp=\sin^2(\pi x)\sin^2(\pi y),\\
	 &   \ud=0, \quad \alpha=10^{-3},\quad\Uad=[-750,-50],\quad \bu=P_{[-750,-50]}\left(-\frac{{1}}{\alpha}\bp\right).
	 \end{align*}
	  Table \ref{table.gr} shows the error estimates and the orders of convergence for the control, the post-processed control, the state and the adjoint variables. Observe that linear orders of convergence are obtained for the state and adjoint variable in the energy norm, and quadratic orders of convergence are obtained for state and adjoint variables in $L^{2}$ and $H^1$ norm. Also, linear order of convergence for the control variable, and quadratic order of convergence for the post-processed control are obtained in $L^{2}$ norm. The theoretical rates of convergence in Theorem \ref{theorem.control.DB.HDMcontrol}, Proposition \ref{theorem.state.adj.DB.HDMcontrol}, Remark \ref{rates.control.HDMcontrol}, Theorem \ref{thm.superconvergence.HDMcontrol} and Corollary \ref{cor.superconvergence.HDMcontrol} are confirmed by these numerical outputs.
	\begin{table}[h!!]
		\caption{\small{(GR) Convergence results for the relative errors, Example 1}}
			{\small{\footnotesize
\begin{center}
	\begin{tabular}{ ||c||c|c||c | c ||c|c|| c|c||c|c||c|c||}%c|c|}
		\hline
$h$ &$\err_\disc(\by)$ & Order  & $\err_\disc(\nabla \by)$ & Order  &$\err_\disc(\hessian \by)$ & Order  &$\err(\bu)$ & Order\\ 
	\hline\\[-10pt]  &&\\[-10pt]
0.353553& 2.192387&    -&   0.692406&  -&   0.817825&     -& 0.537029&-\\
0.176777& 0.131323&  4.0613& 0.079054& 3.1307& 0.245715&  1.7348& 0.190741& 1.4934\\
0.088388& 0.032735&  2.0042& 0.019531& 2.0171& 0.116596& 1.0755&  0.081011&   1.2354\\
0.044194& 0.008220&  1.9936& 0.004757& 2.0376& 0.057374& 1.0230&  0.038235&  1.0832\\
0.022097& 0.002081&  1.9821& 0.001215& 1.9695& 0.028479&  1.0105& 0.018865& 1.0192\\		
	\hline				
\end{tabular}
	\end{center}
\begin{center}
	\begin{tabular}{||c||c|c||c | c ||c|c|| c|c||c|c||c|c||}%c|c|}
		\hline
		$h$ &$\err_\disc(\bp)$ & Order  & $\err_\disc(\nabla \bp)$ & Order  &$\err_\disc(\hessian \bp)$ & Order  &$\err(\tu)$ & Order\\ 
		\hline\\[-10pt]  &&\\[-10pt]
0.353553& 3.132234&      -& 0.721611&     - & 0.855785&  -&     0.593791&    -\\
0.176777& 0.145384& 4.4293& 0.099972& 2.8516& 0.246647& 1.7948& 0.126971& 2.2255\\
0.088388& 0.036226& 2.0048& 0.023097& 2.1138& 0.116471& 1.0825& 0.032031&  1.9870\\
0.044194& 0.009068& 1.9982& 0.005552& 2.0567& 0.057308& 1.0231& 0.007716& 2.0536\\
0.022097& 0.002261& 2.0037& 0.001363& 2.0266& 0.028470& 1.0093& 0.001874& 2.0416\\
		\hline				
	\end{tabular}
\end{center}	
		}}\label{table.gr}
\end{table}	
\subsubsection{Example 2}\label{example2}
 Consider the non-convex L-shaped domain given by $\Omega=(-1,1)^2 \setminus\big{(}[0,1)\times(-1,0]\big{)}$. The source term $f$ and the observation $\by_d$ are chosen such that the model problem has the exact singular solution {{\cite[Section 3.4.1]{grisvard}}} given by 
$$
\by(r,\theta)=\bp(r,\theta)=(r^2 \cos^2\theta-1)^2 (r^2 \sin^2\theta-1)^2 r^{1+ \gamma}g_{\gamma,\omega}(\theta)
$$
where $ \gamma\approx 0.5444837367$ is a non-characteristic 
root of $\sin^2( \gamma\omega) =  \gamma^2\sin^2(\omega)$, $\omega=\frac{3\pi}{2}$, and
$g_{\gamma,\omega}(\theta)=(\frac{1}{\gamma-1}\sin ((\gamma-1)\omega)-\frac{1}{ \gamma+1}\sin(( \gamma+1)\omega))(\cos(( \gamma-1)\theta)-\cos(( \gamma+1)\theta))$ 
$-(\frac{1}{\gamma-1}\sin(( \gamma-1)\theta)-\frac{1}{ \gamma+1}\sin(( \gamma+1)\theta))
(\cos(( \gamma-1)\omega)-\cos(( \gamma+1)\omega)).$ The exact control $\bar{u}$ is chosen as $\bar{u}=P_{[-600,-50]} ( -
\frac{1}{ \alpha}\bp )$, where $\alpha=10^{-3}$. 

\smallskip

The errors and orders of convergence for the numerical approximation to the control problem are presented in Table \ref{table.gr.Lshaped}. This example is particularly interesting since the solution is less regular due to the corner singularity. Since $\O$ is non-convex, we obtain only suboptimal orders of convergence for the state and adjoint variables in the energy, $H^1$ and $L^2$ norms. This can be clearly seen from the table. Also, observe suboptimal order of convergence for the post processed control in $L^2$ norm. However, the control converges at the optimal rate of $h$.

	\begin{table}[h!!]
	\caption{\small{(GR) Convergence results for the relative errors, Example 2}}
	{\small{\footnotesize
			\begin{center}
				\begin{tabular}{ ||c||c|c||c | c ||c|c|| c|c||c|c||c|c||}%c|c|}
					\hline
					$h$ &$\err_\disc(\by)$ & Order  & $\err_\disc(\nabla \by)$ & Order  &$\err_\disc(\hessian \by)$ & Order  &$\err(\bu)$ & Order\\ 
					\hline\\[-10pt]  &&\\[-10pt]

0.353553& 0.291494& -&0.348393& -& 0.680812& -& 0.436002 &-\\
0.176777 &0.105991& 1.4595& 0.124221&1.4878& 0.267124& 1.3497& 0.184329 &1.2421\\
0.088388& 0.033654& 1.6551& 0.042625& 1.5431& 0.125727& 1.0872& 0.070284& 1.3910\\
0.044194& 0.009829& 1.7757& 0.013149& 1.6967& 0.066726& 0.9140& 0.028551& 1.2997\\ 
0.022097& 0.002745& 1.8403& 0.003927& 1.7436& 0.039001& 0.7748& 0.012847& 1.1521\\	
					\hline				
				\end{tabular}
			\end{center}
			\begin{center}
				\begin{tabular}{||c||c|c||c | c ||c|c|| c|c||c|c||c|c||}%c|c|}
					\hline
					$h$ &$\err_\disc(\bp)$ & Order  & $\err_\disc(\nabla \bp)$ & Order  &$\err_\disc(\hessian \bp)$ & Order  &$\err(\tu)$ & Order\\ 
					\hline\\[-10pt]  &&\\[-10pt]
0.353553& 0.447020& -&0.377361& -&0.440887& -& 0.421946& -\\
0.176777& 0.177536& 1.3322& 0.142128& 1.4088& 0.217775& 1.0176& 0.160613& 1.3935\\
0.088388& 0.059354& 1.5807& 0.046061& 1.6256& 0.115940& 0.9095& 0.051943& 1.6286\\
0.044194& 0.018013& 1.7203& 0.013879& 1.7307& 0.064817& 0.8389& 0.015548& 1.7402\\
0.022097& 0.005357& 1.7496& 0.004228& 1.7146& 0.038615& 0.7472& 0.004564& 1.7685\\ 	
					\hline				
				\end{tabular}
			\end{center}	
	}}\label{table.gr.Lshaped}
\end{table}	
\subsection{Adini Nonconforming Finite Element Method}
\subsubsection{Example 1} Consider the example in Section \ref{example1}. Table \ref{table.Adini} presents the numerical results for the control problem. As seen in the table, the rate of convergence is quadratic in $L^2$, $H^1$ and $H^2$-norms for the state and adjoint variables whereas linear for the control variable in $L^2$ norms as expected by the theoretical estimates. Also, the rate of convergence is quadratic for the post-processed control.

	\begin{table}[h!!]
	\caption{\small{(Adini) Convergence results for the relative errors, Example 1}}
	{\small{\footnotesize
			\begin{center}
				\begin{tabular}{ ||c||c|c||c | c ||c|c|| c|c||c|c||c|c||}%c|c|}
					\hline
					$h$ &$\err_\disc(\by)$ & Order  & $\err_\disc(\nabla \by)$ & Order  &$\err_\disc(\hessian \by)$ & Order  &$\err(\bu)$ & Order\\ 
					\hline\\[-10pt]  &&\\[-10pt]
0.707107&0.522904& -& 0.507738& -& 0.498445& -& 0.707839& -\\
0.353553& 0.066834& 2.9679& 0.084387& 2.5890& 0.161496& 1.6259& 0.334984& 1.0793\\
0.176777& 0.031181& 1.0999& 0.032892& 1.3593& 0.045880& 1.8155& 0.180827& 0.8895\\
0.088388& 0.007110& 2.1328& 0.007478& 2.1370& 0.011170& 2.0382& 0.091098& 0.9891\\
0.044194& 0.002055& 1.7907& 0.002133& 1.8100& 0.002903& 1.9438& 0.045888& 0.9893\\
0.022097& 0.000448& 2.1960& 0.000471& 2.1782& 0.000701& 2.0512& 0.022965& 0.9987\\								
					\hline				
				\end{tabular}
			\end{center}
			\begin{center}
				\begin{tabular}{||c||c|c||c | c ||c|c|| c|c||c|c||c|c||}%c|c|}
					\hline
					$h$ &$\err_\disc(\bp)$ & Order  & $\err_\disc(\nabla \bp)$ & Order  &$\err_\disc(\hessian \bp)$ & Order  &$\err(\tu)$ & Order\\ 
					\hline\\[-10pt]  &&\\[-10pt]
0.707107& 0.160991& -& 0.160299& -&0.212052& -& 0.104473&-\\
0.353553& 0.076681& 1.0700& 0.090171& 0.8300& 0.163217& 0.3776& 0.058716& 0.8313\\
0.176777& 0.023139& 1.7285& 0.024586& 1.8748& 0.042476& 1.9420& 0.017007& 1.7876\\
0.088388& 0.006076& 1.9292& 0.006316& 1.9608& 0.010720& 1.9864& 0.004510& 1.9148\\
0.044194& 0.001538& 1.9821& 0.001590& 1.9896& 0.002686& 1.9966& 0.001144& 1.9791\\
0.022097& 0.000386& 1.9958& 0.000398& 1.9976& 0.000672& 1.9992& 0.000287& 1.9928\\
					\hline				
				\end{tabular}
			\end{center}	
	}}\label{table.Adini}
\end{table}
\subsubsection{Example 2} In this example, choose the L-shaped domain and data as in Section \ref{example2}. The errors and orders of convergence for the control, post-processed control, state and adjoint variables are shown in Table \ref{table.Adini.Lshaped}. The same comments as in Section \ref{example2} can be made about the rates of convergence for these variables. %The sub-optimal order of convergences can be clearly seen from Table \ref{table.Adini.Lshaped}, that is $\mathcal{O}(h^{2\gamma})$ in the $L^2$ and $H^1$ norms, and $\mathcal{O}(h^{\gamma}$) in the energy norm.  As expected, $\mathcal{O}(h^{\gamma}$) is obtained for the state and adjoint variables in the energy norm. The sub-optimal order of convergences can be clearly seen from Table \ref{table.Adini.Lshaped}, that is $\mathcal{O}(h^{2\gamma})$ in the $L^2$ and $H^1$ norms, and 
\begin{table}[h!!]
	\caption{\small{(Adini) Convergence results for the relative errors, Example 2}}
	{\small{\footnotesize
			\begin{center}
				\begin{tabular}{ ||c||c|c||c | c ||c|c|| c|c||c|c||c|c||}%c|c|}
					\hline
					$h$ &$\err_\disc(\by)$ & Order  & $\err_\disc(\nabla \by)$ & Order  &$\err_\disc(\hessian \by)$ & Order  &$\err(\bu)$ & Order\\ 
					\hline\\[-10pt]  &&\\[-10pt]
	0.707107& 0.452221& -& 0.536830& -& 0.593857& -& 0.497598& -\\
	0.353553& 0.071842& 2.6541& 0.086455& 2.6344& 0.155303& 1.9350& 0.234176& 1.0874\\
	0.176777& 0.023752& 1.5968& 0.027716& 1.6412& 0.074929& 1.0515& 0.123440& 0.9238\\
	0.088388& 0.006276& 1.9200& 0.007639& 1.8592& 0.046077& 0.7015& 0.062943& 0.9717\\
	0.044194& 0.001409& 2.1551& 0.002012& 1.9248& 0.030899& 0.5765& 0.031628& 0.9928\\
	0.022097& 0.000462& 1.6102& 0.000705& 1.5129& 0.021104& 0.5500& 0.015847& 0.9970\\				
					
					\hline				
				\end{tabular}
			\end{center}
			\begin{center}
				\begin{tabular}{||c||c|c||c | c ||c|c|| c|c||c|c||c|c||}%c|c|}
					\hline
					$h$ &$\err_\disc(\bp)$ & Order  & $\err_\disc(\nabla \bp)$ & Order  &$\err_\disc(\hessian \bp)$ & Order  &$\err(\tu)$ & Order\\ 
					\hline\\[-10pt]  &&\\[-10pt]
0.707107& 0.088830& -&0.204666&  -& 0.371137& -& 0.057421& -\\
0.353553& 0.024503& 1.8581& 0.046782& 2.1293& 0.137684& 1.4306& 0.016297& 1.8169\\
0.176777& 0.005945& 2.0432& 0.012224& 1.9362& 0.070724& 0.9611& 0.004417& 1.8835\\
0.088388& 0.001470& 2.0162& 0.003713& 1.7189& 0.045454& 0.6378& 0.001376& 1.6830\\
0.044194& 0.000600& 1.2931& 0.001329& 1.4828& 0.030815& 0.5608& 0.000601& 1.1938\\
0.022097& 0.000321& 0.9011& 0.000546& 1.2823& 0.021083& 0.5476& 0.000298& 1.0133\\					
					\hline				
				\end{tabular}
			\end{center}	
	}}\label{table.Adini.Lshaped}
\end{table}	

\subsection{Finite Volume Method}
In this method, the schemes were first tested on a series of regular triangular meshes (\texttt{mesh1} family) and then on square meshes (\texttt{mesh2} family), both taken from \cite{benchmark} for Example \ref{example1}. To ensure that the meshes satisty the orthogonality property, the point $\x_\cell \in \cell$ is chosen as the circumcenter of $K$ if $K$ is a triangle, or the center of mass of $K$ if $K$ is a rectangle. As a result, for triangular meshes, the errors in $L^2$ norm are computed using a skewed midpoint rule, where the circumcenter of each cell is considered instead of its center of mass. Let the relative $H^2$ error be denoted by
\[
\err_\disc(\Delta\by) :=\frac{\norm{\Delta_\disc \by_\disc -\Delta \by}{}}{\norm{\Delta\by}{}}, \quad \err_\disc(\Delta\bp) :=\frac{\norm{\Delta_\disc \bp_\disc -\Delta \bp}{}}{\norm{\Delta\bp}{}}.
\]% As a result, for triangular meshes, the $L^2$ error, $\err_\disc(\bu)$, is calculated using a skewed midpoint rule, where we consider the circumcenter of each cell instead of its center of mass.
The errors of the numerical approximations to state, adjoint and control variables on uniform meshes are shown in Tables \ref{table.fvm.triangle}-\ref{table.fvm.square}.
\begin{table}[h!!]
	\caption{\small{(FV) Convergence results for the relative errors, triangular mesh}}
{\small{\footnotesize
\begin{center}
	\begin{tabular}{ ||c||c|c||c | c ||c|c|| c|c||c|c||c|c||}%c|c|}
			\hline
$h$ &$\err_\disc(\by)$ & Order  & $\err_\disc(\nabla \by)$ & Order  &$\err_\disc(\Delta \by)$ & Order  &$\err(\bu)$ & Order\\ 
		\hline\\[-10pt]  &&\\[-10pt]
0.250000& 0.200670&  -&     0.298405&      -& 0.165136&      -&0.245085&-\\
0.125000& 0.021019& 3.2551& 0.135346& 1.1406& 0.057870& 1.5128&0.116630& 1.0713\\
0.062500& 0.005108& 2.0409& 0.066054& 1.0349& 0.030285& 0.9342&0.057540& 1.0193\\
0.031250& 0.001178& 2.1169& 0.032808& 1.0096& 0.016785& 0.8514&0.028819&  0.9976\\
0.015625& 0.000265& 2.1513& 0.016374& 1.0026& 0.009900& 0.7617&0.014408& 1.0001\\
		\hline				
	\end{tabular}
\end{center}
\begin{center}
\begin{tabular}{ ||c||c|c||c | c ||c|c|| c|c||c|c||c|c||}%c|c|}
						\hline
$h$ &$\err_\disc(\bp)$ & Order  & $\err_\disc(\nabla \bp)$ & Order  &$\err_\disc(\Delta \bp)$ & Order  &$\err(\tu)$ & Order\\ 
	\hline\\[-10pt]  &&\\[-10pt]
0.250000&0.230914& -&0.316994&- & 0.189286&   -&  0.094040& -\\
0.125000&0.032775& 2.8167& 0.136993& 1.2104& 0.061257& 1.6276& 0.024703& 1.9286 \\
0.062500&0.007282& 2.1703& 0.066202& 1.0492& 0.030607& 1.0010& 0.004857&  2.3465\\
0.031250&0.001693& 2.1049& 0.032824& 1.0121& 0.016820& 0.8637& 0.001200&   2.0174\\
0.015625&0.000380& 2.1557& 0.016376& 1.0032& 0.009904& 0.7641& 0.000260&  2.2042\\
						\hline				
					\end{tabular}
				\end{center}	
			}}\label{table.fvm.triangle}
		\end{table}		  
In the case of triangular meshes, slightly better quadratic order of convergence for the state and adjoint variables in $L^2$ norm, linear order of convergence in $H^1$ norm and sublinear in $H^2$ norm are obtained. The control converges at the optimal rate of $h$, whereas the post processed control converges with quadratic rate, which is a superconvergence result.

\smallskip		
		
For the square meshes, we obtain quadratic rate of convergence in $L^2,$ $H^1$ and $ H^2$ norms for the state and adjoint variables. This superconvergence result is not entirely surprising, since rectangular meshes are extremely regular and symmetric. Without post-processing, an $\mathcal O(h)$ convergence rate is obtained on the control variable. The numerical results are better than the theoretical rates stated in Theorem \ref{theorem.control.DB.HDMcontrol}, Proposition \ref{theorem.state.adj.DB.HDMcontrol} and Remark \ref{rates.control.HDMcontrol}. Also, using a post-processing step, an improved error estimate for the post-processeed control variable with quadratic order of convergence is obtained numerically.
	\begin{table}[h!]
	%	\centering
		\caption{\small{(FV) Convergence results for the relative errors, square mesh}}
		{\small{\footnotesize
				\begin{center}
					\begin{tabular}{ ||c||c|c||c | c ||c|c|| c|c||c|c||c|c||}%c|c|}
						\hline
						$h$ &$\err_\disc(\by)$ & Order  & $\err_\disc(\nabla \by)$ & Order  &$\err_\disc(\Delta \by)$ & Order  &$\err(\bu)$ & Order\\ 
						\hline\\[-10pt]  &&\\[-10pt]
0.353553& 0.288994&      -& 0.196325&      -& 0.270192&      -& 0.398184& -\\
0.176777& 0.060061& 2.2665& 0.045562& 2.1073& 0.056607& 2.2549& 0.187167&  1.0891\\
0.088388& 0.015072& 1.9946& 0.010322& 2.1420& 0.014538& 1.9612& 0.092209& 1.0213\\
0.044194& 0.003700& 2.0263& 0.002590& 1.9945& 0.003551& 2.0334& 0.045989&  1.0036\\
0.022097& 0.000927& 1.9968& 0.000642& 2.0125& 0.000891& 1.9941& 0.022985& 1.0006\\
						\hline				
					\end{tabular}
				\end{center}
				\begin{center}
					\begin{tabular}{ ||c||c|c||c | c ||c|c|| c|c||c|c||c|c||}%c|c|}
						\hline
						$h$ &$\err_\disc(\bp)$ & Order  & $\err_\disc(\nabla \bp)$ & Order  &$\err_\disc(\Delta \bp)$ & Order  &$\err(\tu)$ & Order\\ 
						\hline\\[-10pt]  &&\\[-10pt]
0.353553& 0.300063&      -& 0.189326&      -& 0.285835&   -&    0.144022&-\\
0.176777& 0.066723& 2.1690& 0.039945& 2.2448& 0.065909& 2.1166&0.035315& 2.0280\\
0.088388& 0.016237& 2.0389& 0.009482& 2.0747& 0.016092& 2.0342&0.009726& 1.8604\\
0.044194& 0.004033& 2.0093& 0.002337& 2.0208& 0.003998& 2.0091&0.002471& 1.9766\\
0.022097& 0.001007& 2.0023& 0.000582& 2.0054& 0.000998& 2.0023&0.000589& 2.0693\\
						\hline				
					\end{tabular}
				\end{center}	
			}}\label{table.fvm.square}
		\end{table}	
%		
%		
%For the square meshes, we obtain quadratic rate of convergence in $L^2,$ $H^1$ and $ H^2$ norms for the state and adjoint variables. Without post-processing, an $\mathcal O(h)$ convergence rate is obtained on the controls and post-processing step leads to quadratic order of convergence. The numerical results are better than the theoretical rates stated in Theorem \ref{theorem.control.DB.HDMcontrol}, Proposition \ref{theorem.state.adj.DB.HDMcontrol} and Remark \ref{rates.control.HDMcontrol}. Also, using a post-processing step, an improved error estimate for the control variable is obtained numerically.
 
 \medskip
 
%%%%%%%%%%%%%%%%%%%%%%%%%%%%%%%%%%%%%%%%%%%%%%%%%%%%%%%%%%%
 \textbf{Acknowledgment: }The author would like to sincerely thank Prof J\'er\^ome Droniou and Prof Neela Nataraj for their fruitful comments. 
%%%%%%%%%%%%%%%%%%%%%%%%%%%%%%%%%%%%%%%%%%%%%%%%%%%%%%%%%%
\bibliographystyle{abbrv}
\bibliography{HDM_nonlinear_elliptic}

\end{document}